\documentclass{article}

\usepackage{graphicx,xypic}
\usepackage{amsthm}
\usepackage{amsmath,amssymb}
\usepackage{amsfonts}
\usepackage{xcolor}
\usepackage[margin=0.9in]{geometry}
\usepackage[shortlabels]{enumitem}
\usepackage{sectsty}
\usepackage{hyperref}
\usepackage{mathtools}
\usepackage{multirow} 
\usepackage{multicol} 
\usepackage{changepage}
\usepackage{float}
\usepackage{indentfirst}
\usepackage{titlecaps}
\usepackage{amssymb}
\usepackage[nottoc]{tocbibind}
\usepackage{tikz}

\usepackage{MnSymbol}

\usepackage[runin]{abstract}

\usepackage{titlesec}
\usepackage{sectsty}
\sectionfont{\centering} 

\sectionfont{\fontsize{12}{15}\selectfont}
\newlength{\rowA}
\newcommand{\strutA}{
\rule[-0.45\rowA]{0pt}{\rowA}
}

\title{Balancedness of Normal Bundles of Rational Curves in Grassmannians}
\author{An Cao}
\date{}
\theoremstyle{plain}
\newtheorem{thm}{Theorem}[section]
\newtheorem{lemma}[thm]{Lemma}
\newtheorem{conj}[thm]{Conjecture}
\newtheorem{cor}[thm]{Corollary}
\newtheorem{prop}[thm]{Proposition}
\newtheorem{case}{Case}
\theoremstyle{definition}

\newtheorem{definition}[thm]{Definition}

\newcommand{\pdv}[2]{\frac{\partial #1}{\partial #2}}

\begin{document}

\maketitle
\titleformat{\section}[block]{\filcenter\scshape}{\thesection.}{0.5em}{\titlecap}
\titleformat{\subsection}[block]{\scshape}{\thesubsection.}{0.5em}{\titlecap}

\begin{abstract}
In projective space $\mathbb{P}^r$, the normal bundle of a general rational curve of any degree $d \geq r$ is balanced except over fields of characteristic 2. In \cite{CLV24}, Coskun--Larson--Vogt found counterexamples to the naive generalization of this statement to general rational curves in Grassmannians and proposed a conjecture for when the normal bundle is balanced. In this paper, we prove their conjecture for the Grassmannians $G(2, 4)$, $G(2, 5)$, and $G(2, 6)$.
\end{abstract}

\section{Introduction}
Rational curves are a fundamental object of study in algebraic geometry. The normal bundles of rational curves, which capture the geometric properties such as their deformation theory, have been a focus of intense research \cite{AR17, ALY19, AR17, CR18, EV81, EV82, GS80, Ran07, S80, S82, LV23}. In the context of projective space, Coskun and Riedl \cite{CR18} classified the splitting types of normal bundles of rational curves, building on earlier work of Sacchiero \cite{GS80, S80, S82}, which states that the generic such normal bundle can be decomposed into line bundles with degrees being at most 1 apart, a property which we call \textit{balanced}. 

The naive analog of this statement is false in Grassmannians \cite{CLV24}, with exceptions found under certain numerical conditions that led to the following conjecture proposed in \cite{CLV24}, which we now explain.
\begin{itemize}
    \item[(i)] Degeneracy exceptions: If $1 < d < \min(k,n-k)$, the rational curve $C$ lives inside a smaller Grassmannian $G(d,d+\max(k,n-k))$. We have $N_{C/G(k,n)} \simeq N_{C/G(d,d+\max(k,n-k))} \oplus (Q|_C)^{\min(k,n-k)-d}$.
    Calculating the slope, we have $Q|_C$ has a summand with degree $0$, and $N_{C/G(d,d+\max(k,n-k))}$ has a summand with degree at least $2$. Hence, the normal bundle $N_{C/G(k,n)}$ cannot be balanced.
    
    If $\min(k,n-k) < d < \max(k,n-k)$, applying duality $G(k,n) \simeq G(n-k,n)$ gives a similar splitting $N_{C/G(k,n)} \simeq N_{C/G(d,d+\min(k,n-k))} \oplus (S|_C^*)^{\max(k,n-k)-d}$. Calculating the slope of each component shows the normal bundle cannot be balanced.
    \item[(ii)] Tangent bundle splitting exceptions: If we write $d = k q_1 + r_1$ with $0 \leq r_1 < k$ and
$d = (n - k) q_2 + r_2$ with $0 \leq r_2 < n - k$, then $S^*|_C \simeq \mathcal{O}(q_1)^{k-r_1} \oplus \mathcal{O}(q_1+1)^{r_1}$ and $Q|_C \simeq \mathcal{O}(q_2)^{n-k-r_2} \oplus \mathcal{O}(q_2+1)^{r_2}$. Given the tensor structure of the tangent bundle $T_{G(k,n)} \simeq S^* \otimes Q$, we have the splitting type of the restricted tangent bundle $T_{G(k,n)}|_C$ is $\simeq \mathcal{O}(q_1+q_2)^{r_1 r_2} \oplus \mathcal{O}(q_1+q_2+1)^{r_1 (n-k-r_2) + (k-r_1)r_2} \oplus \mathcal{O}(q_1+q_2+2)^{(k-r_1)(n-k-r_2)} $. If $r_1 r_2 \neq 0$, then the normal bundle $N_{C/G(k,n)}\simeq \frac{T_{G(k,n)}|_C}{T_C}$ must have a summand of degree $\geq q_1+q_2+2$, and if $q_1 + q_2 \leq (k - r_1)(n - k - r_2)$, the normal bundle must have a summand of degree $q_1+q_2$. Thus these conditions will prevent the normal bundle from being balanced.
    \item[(iii)] Characteristic 2 exceptions: If $k = 1$, we have $N_C$ is balanced if and only if  $N^*_C(1)$ is balanced. Since $N^*_C(1)$ is pullback of a vector bundle under the Frobenius morphism \cite{CLV22,LV23}, all degrees of its summands must be even, and so they must be equal if $N^*_C(1)$ is balanced. Hence, $N_C$ is balanced only if $\mu(N^*_C(1)) = \frac{2-2d}{n-2} \in 2\mathbb{Z}$, that is $d \equiv 1 \bmod{n-2}$. Since $G(1,n) \simeq G(n-1,n)$, we need the same condition for $k=n-1$.
\end{itemize}

Coskun-Larson-Vogt proposed that these are the only exceptions to the normal bundle being balanced and made the following conjecture.

\begin{conj}[Coskun-Larson-Vogt]\label{conjecture} The normal bundle of a general nondegenerate rational curve of degree $d$ in a Grassmannian $G(k,n)$ is balanced except for the following cases:
\begin{enumerate}[(i)]
    \item $1 < d < \min(k,n-k)$ or $\min(k,n-k) < d < \max(k, n - k)$,
    \item If we write
$d = k q_1 + r_1$ with $0 \leq r_1 < k$ and
$d = (n - k) q_2 + r_2$ with $0 \leq r_2 < n - k$
then $d \neq 1$, $r_1 \neq 0$, $r_2 \neq 0$, and $q_1 + q_2 \leq (k - r_1)(n - k - r_2)$.
\item The characteristic is 2, $k=1$ or $k = n-1$, and $d \not \equiv 1 \bmod{n-2}$.
\end{enumerate}
\end{conj}

The main result of this paper is the following. 

\begin{thm}\label{mainthm}
    Conjecture \ref{conjecture} holds for $G(2,4)$ and $G(2,6)$ in characteristic $0$, and holds for $G(2,5)$ in any characteristic.
\end{thm}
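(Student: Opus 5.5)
We prove the balanced direction of Conjecture \ref{conjecture} by degeneration plus induction on $d$. The non-balanced direction is already supplied by items (i)--(iii) of the introduction. For $k=2$ with $n\in\{4,5,6\}$ the characteristic-2 exception (iii) does not apply, and the degeneracy exception (i) applies only in narrow ranges. For example, in $G(2,6)$ we have $\min=2$ and $\max=4$, so (i) gives only $d=3$. So the first task is to enumerate, for each $n$, exactly which $d$ fall under (i) or (ii). For instance, in $G(2,5)$ we have $d=2q_1+r_1$ and $d=3q_2+r_2$, and (ii) excludes finitely many small $d$ with $d$ odd and $3\nmid d$ for which $q_1+q_2\le 3-r_2$. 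For every remaining $d$ we must produce one rational curve of degree $d$ whose normal bundle is balanced. Balancedness is an open condition in flat families, and the space of nondegenerate rational curves of degree $d$ is irreducible. Hence a single example, or a single balanced limit in a flat family of normal bundles, suffices.

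\textbf{Inductive step.} We use the standard degeneration method of Coskun--Riedl and Larson--Vogt. Degenerate a degree-$d$ curve to a reducible curve $C\cup L$, where $C$ has degree $d-1$ or $d-2$ and $L$ is a line or a conic meeting $C$ at one point $p$. The normal bundle $N_{C\cup L}$ restricts on $C$ to an elementary modification $N_C[p\to v]$ of $N_C$ in the tangent direction $v$ of $L$ at $p$. On $L$ it restricts to $N_L$ modified at $p$ toward $C$. A gluing lemma then shows $N_{C\cup L}$ smooths to a balanced bundle when the following hold: $N_C$ is balanced; the modification direction $v$ is general enough to raise only a summand of minimal degree; and the restriction to $L$ has the appropriate twist. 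In Grassmannians, lines through $p$ sweep only the cone of rank-one tangent vectors in $\operatorname{Hom}(S_p,Q_p)$. This is exactly why the naive statement fails, and the step needs care. We therefore track the subbundle of $N_C$ generated by rank-one directions and check that these directions are not contained in the positive summands. When they are contained there, we instead attach a conic or a twisted cubic in a sub-Grassmannian, whose tangent directions achieve higher rank. We choose the step size so that $d\mapsto d+1$ or $d+2$ avoids the exceptional degrees.

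\textbf{Base cases and the main obstacle.} The induction needs starting curves in the lowest non-exceptional degrees, and ideally in two consecutive residues, since the splitting pattern of $S^*|_C$ and $Q|_C$ changes with $d \bmod 2$ and $d \bmod (n-2)$. We verify these directly. Write an explicit map $\mathbb{P}^1\to G(2,n)$ as a $2\times n$ matrix of polynomials. Express $N_C$ as the cokernel of $T_{\mathbb{P}^1}\to S^*|_C\otimes Q|_C$, where the splittings of $S^*|_C$ and $Q|_C$ are known from (ii). Compute the splitting type by finding $h^0(N_C(-m))$ for the critical twist $m$. This is a finite rank computation of a matrix of coefficients, done in characteristic 0, or over $\mathbb{Z}$ and checked mod every prime for $G(2,5)$. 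The latter is why the $G(2,5)$ result holds in all characteristics, and we expect characteristic to matter only through a finite set of determinants. The main obstacle is the inductive step near the boundary of exception (ii). There $T_G|_C$ is only barely not forcing imbalance, so the modification direction must hit a specific small summand, and the rank-one constraint on line directions can block it. For those degrees we would first try attaching a conic contained in a $G(2,4)$ sub-Grassmannian. If that fails, we fall back on direct computation of sufficiently many degrees, so that each residue class starts above the last exceptional degree.
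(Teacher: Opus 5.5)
You have the reduction to one balanced example per degree right, and the exception list is fine. Concretely, (i)--(iii) exclude nothing in $G(2,4)$ and $G(2,5)$, and exclude only $d\in\{3,5\}$ in $G(2,6)$. The gap is the inductive step: everything rests on it, and it is asserted rather than proved.

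Balancedness of $N_{C\cup L}|_C=N_C[p\to v]$ does not control the normal bundle of the smoothing $C'$. We have $\deg N_{C'}=\deg N_C+n$, while $N_C[p\to v]$ gains only $1$. The other $n-1$ units of positivity coming from $L$ must enter the limit, and they do so as further modifications of $N_C$ at the single node, along \emph{specific} subbundles rather than in a general direction. The rigorous form of this step is Lemma 4.1 of \cite{CLV24}, recalled in the paper's last section. It identifies the limit with $N_C$ modified at the node toward the generalized pointing bundles of ranks $2$ and $n-2$. Whether that is balanced depends on how those particular subbundles sit relative to the splitting of $N_C$. Your ``gluing lemma'' with ``$v$ general enough'' and ``the appropriate twist on $L$'' does not address this, and the proposed conic or twisted-cubic attachments come with no analysis.

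When this degeneration is actually run (the paper's final section), it cannot conclude balancedness whenever the fractional part of $\delta$ is $\tfrac12$. That happens for \emph{every} $d\equiv 5\pmod 6$ in $G(2,4)$ and every $d\equiv 5\pmod{14}$ in $G(2,6)$. Even in the good classes it only reduces to a \emph{modified} normal bundle of a lower-degree curve in a smaller Grassmannian, so your inductive hypothesis would have to carry such modifications.

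So the obstruction is not confined to small degrees near the boundary of exception (ii), and no finite set of base cases closes an infinite residue class. Similarly, checking base cases mod every prime does not give the all-characteristic statement for $G(2,5)$ unless the inductive step itself is characteristic-free, which you do not address.

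What is missing is a construction uniform in $d$. The paper uses no induction at all. For each residue class ($d \bmod 6$, $d\bmod 2$, $d \bmod 14$) it does the following:
\begin{itemize}
\item writes an explicit $t$-parametrized polynomial matrix ($3\times 5$ for $G(2,5)\simeq G(3,5)$);
\item exhibits generators of the column-relation module, independent at a point and with degrees summing to $d$, to obtain $Q$;
\item filters $N$ by a piece such as $S^*\otimes Q'$ or $S'^*\otimes Q$, whose summands can be read off directly;
\item computes the quotient by $T_{\mathbb{P}^1}$ from explicit syzygies and gcd computations among the components of $T_{\mathbb{P}^1}\to S^*\otimes Q''$;
\item concludes with Lemma~\ref{deg-exact-seq}.
\end{itemize}
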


\paragraph{Acknowledgements.}I would like to express my sincere gratitude to Professor Eric Larson for his guidance and support throughout this project. This research was supported by funding from the Brown SPRINT/UTRA program and the NSF grants DMS-2200641 and DMS-2440719.
\section{Methods}
Since the condition of balancedness is open, it suffices to exhibit a particular rational curve of degree $d$ in $G(k,n)$ whose normal bundle is balanced.

A rational curve of degree $d$ in a Grassmannian $G(k,n)$ can be parameterized as the image of the map 
$$ \mathbb{P}^1 \xrightarrow{\text{degree } d} G(k,n)$$
\begin{equation}\label{omatrix}
[x:y] \longmapsto 
\begin{bmatrix}
    f_{11} & f_{12} & f_{13} & \dots  & f_{1n} \\
    f_{21} & f_{22} & f_{23} & \dots  & f_{2n} \\
    \vdots & \vdots & \vdots & \ddots & \vdots \\
    f_{k1} & f_{k2} & f_{k3} & \dots  & f_{kn}
\end{bmatrix}
\end{equation}
where $(f_{i1}(x,y), f_{i2}(x,y), \cdots, f_{in}(x,y))$ is a $n$-tuple of homogeneous polynomials of degree $d_i$ in $x,y$ for $i = \overline{1,k}$ such that $\sum_{i=1}^k d_i = d$ and the matrix has rank $k$ for all $(x,y) \neq (0,0)$.

For such a rational curve, the dual of the universal subbundle is $S^* = \mathcal{O}(d_1) \oplus \mathcal{O}(d_2)\oplus \cdots \oplus  \mathcal{O}(d_k) $. The universal quotient bundle $Q$ corresponds to the module of relations among the columns of the matrix \eqref{omatrix}. Assuming the map is unramified, these are related to the normal bundle via the following exact sequence.
\begin{equation}\nonumber
\begin{aligned}
    0 \rightarrow T_{\mathbb{P}^1} \rightarrow S^* \otimes Q = T_{G(k,n)}|_{\mathbb{P}^1} \rightarrow  N_{\mathbb{P}^1/G(k,n)} \rightarrow 0
\end{aligned}
\end{equation}

In order to study the normal bundle, we will explicitly write down the map $ T_{\mathbb{P}^1} \rightarrow S^* \otimes Q$ in the above sequence. If the module of relations is generated by $g_{lj}$, where $l = \overline{1,n-k}$ so that $\sum_{j=1}^n g_{lj}f_{ij}=0$, then the map $ T_{\mathbb{P}^1} \rightarrow S^* \otimes Q$ is given by the expression  
\begin{equation}\label{par-der}
\begin{aligned}
    \frac{1}{x} \sum g_{lj} \pdv{f_{ij}}{y} = -\frac{1}{y} \sum g_{lj}\pdv{f_{ij}}{x}
\end{aligned}
\end{equation}
and the map $\mathbb{P}^1 \xrightarrow{\text{degree } d} G(k,n)$ is unramified if none of these vanish simultaneously when $(x,y) \neq (0,0)$. 

We will decompose $Q = Q' \oplus Q''$ so that in the exact sequence
\begin{equation}\label{exact-seq}
\begin{aligned}
    0 \rightarrow S^* \otimes Q' \rightarrow N_{\mathbb{P}^1/G(k,n)} \rightarrow  (S^* \otimes Q'') / {T_{\mathbb{P}^1}}  \rightarrow 0,
\end{aligned}
\end{equation}
the tensor product $S^* \otimes Q' = 
\bigoplus \mathcal{O}(a_i)$ where the $a_i$'s have the desired degrees for the balanced splitting type. (Depending on the case specifics, we can also decompose $S$ and obtain variants of this exact sequence.) Then we will find independent relations among the polynomials represented by the map $T_{\mathbb{P}^1} \rightarrow S^* \otimes Q''$. These will show that the degrees of the summands of $S^* \otimes Q'$ and $(S^* \otimes Q'') / {T_{\mathbb{P}^1}}$ are within 1 of each other. The balancedness of the normal bundle then follows from the following lemma.

\begin{lemma}[Lemma 2.1 in \cite{CLV24}]\label{deg-exact-seq}
    Let $0 \rightarrow E \rightarrow F \rightarrow G \rightarrow 0$ be an exact sequence of vector bundles on $\mathbb{P}^1$. If the degrees of all of the summands of $E$ and $G$ lie in a fixed interval, then the degrees of all of the summands of $F$ lie in the same interval.
\end{lemma}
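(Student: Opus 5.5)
We argue directly with $\operatorname{Hom}$-vanishing on $\mathbb{P}^1$. By Grothendieck's theorem, write $E \simeq \bigoplus \mathcal{O}(e_i)$, $G \simeq \bigoplus \mathcal{O}(g_j)$ and $F \simeq \bigoplus \mathcal{O}(c_k)$. Assume all $e_i$ and $g_j$ lie in $[a,b]$. We must show each $c_k$ lies in $[a,b]$, and we treat the upper and lower bounds separately. The only input is that $H^0(\mathcal{O}(m)) = 0$ for $m<0$. Hence $\operatorname{Hom}(\mathcal{O}(c), \mathcal{O}(e)) = 0$ whenever $c > e$, and this extends additively to direct sums.

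\emph{Upper bound.} Suppose some summand $\mathcal{O}(c)$ of $F$ has $c > b$, and let $\iota : \mathcal{O}(c) \hookrightarrow F$ be the inclusion of that summand. Composing with $F \to G$ gives an element of $\operatorname{Hom}(\mathcal{O}(c), G) = \bigoplus_j H^0(\mathcal{O}(g_j - c))$. This group is $0$ because every $g_j \le b < c$. So $\iota$ factors through $E = \ker(F \to G)$. Since $\iota$ is nonzero, this produces a nonzero element of $\operatorname{Hom}(\mathcal{O}(c), E) = \bigoplus_i H^0(\mathcal{O}(e_i - c))$. But that group is also $0$ since $e_i \le b < c$, a contradiction.

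\emph{Lower bound.} This is the dual argument; alternatively, apply the upper bound to the dual sequence $0 \to G^* \to F^* \to E^* \to 0$. Directly: suppose some summand has $c < a$, and let $\pi : F \to \mathcal{O}(c)$ be the projection onto it, which is surjective and hence nonzero. Its restriction to $E$ lies in $\operatorname{Hom}(E, \mathcal{O}(c)) = \bigoplus_i H^0(\mathcal{O}(c - e_i))$. This vanishes because $e_i \ge a > c$. So $\pi$ factors through $F/E \simeq G$, giving a nonzero element of $\operatorname{Hom}(G, \mathcal{O}(c)) = \bigoplus_j H^0(\mathcal{O}(c-g_j))$. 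That group is $0$ since $g_j \ge a > c$, a contradiction.

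There is no real obstacle. The only points to check are that the maps used are genuinely nonzero (an inclusion of a summand and a projection onto a summand) and that the exactness of $0 \to E \to F \to G \to 0$ gives the factorizations. Both are immediate, so $a \le c_k \le b$ for all $k$.
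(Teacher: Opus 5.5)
Your proof is correct and is essentially the paper's argument. The upper bound is the same $H^0$-vanishing step, which the paper phrases as $h^0(F(-n-1))=0$ for degrees in $[m,n]$. Your lower bound via $\operatorname{Hom}(-,\mathcal{O}(c))$, or equivalently via the dual sequence, is just the Serre-dual form of the paper's $h^1(F(-m-1))=0$; note that the paper's written sketch has the twists $-m-1$ and $-n-1$ interchanged, whereas your inequalities go in the correct directions.
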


We briefly recall the proof here. Suppose the degrees of all of the summands of $E$ and $G$ lie in $[m,n]$.  Then $h^0(E(-m-1))=h^0(G(-m-1))=0$ and $h^1(E(-n-1)) = h^1(G(-n-1))=0$, so $h^0(F(-m-1))=0$ and $h^1(F(-n-1))=0$. Hence, the degrees of all of the summands of $F$ also lie in $[m,n]$.

\paragraph{Outline.}In the following section, we will prove Theorem \ref{mainthm}. We will separately prove the theorem for $G(2,4)$, $G(2,5)$, and $G(2,6)$ in each of subsections $3.1,$ $3.2$, and $3.3$.

\section{Proof of Theorem 1.2}
\subsection{$G(2,4)$}

\begin{thm}\label{g24thm}
In characteristic $0$, the normal bundle of a general nondegenerate rational curve of any degree $d$ in the Grassmannian $G(2,4)$ is balanced.
\end{thm}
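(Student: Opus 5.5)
Since balancedness is an open condition, I will exhibit, for each $d$, one explicit unramified rational curve of degree $d$ in $G(2,4)$ with balanced normal bundle. First, the conjecture predicts no exceptions here. Case (i) is vacuous since $\min(k,n-k)=\max(k,n-k)=2$. In case (ii), odd $d=2q+1$ gives $q_1=q_2=q$ and $r_1=r_2=1$, so the condition $2q\le 1$ forces $d=1$, which is excluded. Case (iii) does not arise because $k=2$.

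Since $c_1(T_{G(2,4)})=4\sigma_1$, the normal bundle has rank $3$ and degree $4d-2$. I split by the parity of $d$.

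\textbf{Case $d=2m$.} I take both rows of \eqref{omatrix} of degree $m$. For a general choice, $S^*\cong\mathcal{O}(m)^2$ and $Q\cong\mathcal{O}(m)^2$, so $T_{G(2,4)}|_C\cong\mathcal{O}(2m)^4$. The normal bundle is then the cokernel of the map $\mathcal{O}(2)\to\mathcal{O}(2m)^4$ given by the four forms of degree $2m-2$ in \eqref{par-der}. Dualizing, $N^*$ is the syzygy bundle of these four forms. Balancedness is therefore equivalent to the four forms having no "unexpected" low-degree syzygies. Concretely, with $t=\lfloor (4d-2)/3\rfloor$, I need $h^0(N(-t-1))=0$ and $h^1(N(-t-2))=0$, which is injectivity or surjectivity of an explicit multiplication map of spaces of binary forms.

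\textbf{Case $d=2m+1$.} I take rows of degrees $m$ and $m+1$, so $S^*\cong Q\cong \mathcal{O}(m)\oplus\mathcal{O}(m+1)$ and
\[
T_{G(2,4)}|_C\cong \mathcal{O}(2m)\oplus\mathcal{O}(2m+1)^2\oplus\mathcal{O}(2m+2).
\]
Here I intend to use the method of Section 2. I choose a sub-line-bundle of $S^*$ or $Q$ so that $E=S^*\otimes Q'$ (or $S'^*\otimes Q$) consists of summands with degrees in the target interval $[\lfloor(8m+2)/3\rfloor,\lceil(8m+2)/3\rceil]$. If no such $E$ exists for large $m$, I fall back, as in the even case, on the direct cohomological criterion for the cokernel of $\mathcal{O}(2)\to T_{G(2,4)}|_C$. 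In either route, Lemma \ref{deg-exact-seq} or the $h^0/h^1$ vanishing gives balancedness.

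For the explicit curve I would use a sparse choice, for instance
\[
\begin{bmatrix} x^{d_1} & 0 & y^{d_1} & a\,x^{d_1-1}y\\ 0 & x^{d_2} & b\,xy^{d_2-1} & y^{d_2}\end{bmatrix}
\]
(possibly with a few additional monomials). For this matrix the relations $g_{lj}$, and hence the forms in \eqref{par-der}, can be written down in closed form as sums of few monomials. I would check unramifiedness directly. The syzygy or multiplication-map condition then becomes a statement about a nearly triangular matrix whose diagonal entries are derivative coefficients such as $d_i$ or $d_i-1$. These are nonzero in characteristic $0$, which is where the characteristic hypothesis enters.

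The main obstacle is this last step: choosing the curve so that the forms of \eqref{par-der} remain "generic enough" uniformly in $m$, and proving the required injectivity for all $m$ rather than for small cases. I would first compute the relations for the sparse matrix above in low degrees to find the pattern. Then I would order monomials so that the multiplication matrix is triangular and read off injectivity from its nonzero diagonal. Small degrees ($d\le 3$, including the line, where $N\cong\mathcal{O}\oplus\mathcal{O}(1)^2$) would be verified separately.
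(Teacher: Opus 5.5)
\paragraph{There is a genuine gap.} Your proposal stops where the proof has to start. Everything rests on the hope that, for some explicit curve with balanced $S^*$ (row degrees $(m,m)$ or $(m,m+1)$), the forms from \eqref{par-der} satisfy a maximal-rank syzygy condition for every $m$. You flag this yourself as ``the main obstacle,'' but it is the entire content of the theorem, and nothing supports the claimed triangular structure.

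The template you write down is not even unramified once $\min(d_1,d_2)\ge 2$. At $[0:1]$ the last two columns form an invertible matrix $M(x)$, and in that affine chart the curve is $M(x)^{-1}\mathrm{diag}(x^{d_1},x^{d_2})$, which vanishes to order at least $2$. For example, with $d_1=d_2=2$ the relations are $(-y^2,-bxy,x^2,0)$ and $(-axy,-y^2,0,x^2)$. The four forms are then $2xy,\ bx^2,\ ax^2,\ 2xy$, all divisible by $x$. So $T_{\mathbb{P}^1}\to T_{G(2,4)}|_C$ is not a subbundle, and its cokernel is not the normal bundle.

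Your numerical test is also off by one. The conditions $h^0(N(-t-1))=0$ and $h^1(N(-t-2))=0$ would force every summand to have degree both $\le t$ and $\ge t+1$. The correct conditions are $h^0(N(-t-2))=h^1(N(-t-1))=0$.

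\paragraph{The missing idea.} Your odd-degree remark nearly reaches it. Since balancedness is open, the test curve need not have general $S^*$, and the paper deliberately makes $S^*$ very unbalanced so that the Section~2 method applies with a rank-one quotient. For $d=6t+r$ with $r\ne 5$, the paper takes rows of degrees $5t+r-1$ and $t+1$. Then:
\begin{enumerate}[(a)]
\item $Q\cong\mathcal{O}(3t+a)\oplus\mathcal{O}(3t+b)$;
\item the subbundle $S'^*\otimes Q\cong\mathcal{O}(8t+r-1+a)\oplus\mathcal{O}(8t+r-1+b)$ already has the balanced degrees;
\item $(S''^*\otimes Q)/T_{\mathbb{P}^1}$ is a line bundle, necessarily $\mathcal{O}(8t+r)$, as soon as the two forms $q_{12},q_{22}$ are coprime.
\end{enumerate}
The whole verification therefore collapses to one gcd computation, uniform in $t$; this is where characteristic $0$ enters. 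Lemma~\ref{deg-exact-seq} then finishes the argument.

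The residue $r=5$ breaks this numerology, since $8t+6,\,8t+7$ are not within $1$ of $8t+5$. It needs a separate curve, with both $S$ and $Q$ split, plus an extra step at $x=0$, where two of the three tangent forms share the factor $x$. The cases $d\le 5$ are done with explicit matrices. Without some device of this kind, reducing the problem to a finite check valid for all $d$, your plan is a strategy rather than a proof.
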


\begin{proof} 
The theorem has been proven for $d = 1$. 
Write $d = 6t + r$, $r \in \{0,1,2,3,4,5\}$. We will divide into the following cases:
\begin{case} $r \neq 5$, $t \geq 1$
\normalfont

Let $a = \lfloor \frac{r}{2}\rfloor$, $b = \lceil \frac{r}{2} \rceil$.
Consider the map
\begin{equation}\label{g24-1}
[x:y] \longmapsto 
\begin{bmatrix}
x^{5t+r-1} + x^{2t+a-1}y^{3t+b} & y^{5t+r-1} & x^{5t+r-2}y & 0\\
y^{t+1} & 0 & x^{t}y+xy^{t} & x^{t+1}
\end{bmatrix}
\end{equation}

The rows of the matrix are linearly independent as the minor of the first 2 columns is a power of $y$, and when plugging in $y=0$, the minor of the first and last columns is a power of $x$.

Let $q = \lfloor \frac{t}{2} \rfloor$. We claim the module of relations between the columns of \eqref{g24-1}, which corresponds to the universal quotient bundle $Q$, is generated by the following $2$ relations of degrees $3t+a$ and $3t+b$:
{\small
\begin{table}[h]
    \centering
    \begin{tabular}{c c c c}
$-x^{t+1}y^{2t+a-1}$ & $x^{3t+a}$ &  $x^{t+2}y^{2t+a-2}$ & $y^{3t+a} - x^2 y^{3t+a-2} - x^{t+1}y^{2t+a-1}$\strutA\\ 
\hline
\multirow{4}{3.4cm}[1ex]{$-x^{3t+b-1}y -x^{t}y^{2t+b} \newline- (xy^{3t+b-1}+x^3 y^{3t+b-3}\newline+\cdots  + x^{2q+1} y^{3t+b-1-2q})$
} &  \multirow{4}{3.6cm}[1ex]{ $x^{3t+a-1}y^{b-a+1} 
\newline+ (x^{2t+a}y^{t+b-a}\newline + x^{2t+a+2}y^{t+b-a-2} + \cdots \newline+ x^{2t+a+2q+2}y^{t+b-a-2q-2})$} & \multirow{4}{3.3cm}[1ex]{$x^{3t+b}+x^{t+1} y^{2t+b-1} \newline+ (y^{3t+b} + x^2y^{3t+b-2}+\cdots \newline + x^{2q+2} y^{3t+b-2-2q})$} &  \multirow{4}{5cm}[1ex]{$-x^{3t+b-1}y - x^{2t+b}y^{t} + x^{2t+b-2}y^{t+2} \newline- x^{t}y^{2t+b} - xy^{3t+b-1} \newline- x^{2q+2-t}y^{4t+b-2q-2} -(xy^{3t+b-1} \newline+x^3 y^{3t+b-3}+\cdots + x^{2q+1} y^{3t+b-1-2q})$}\strutA 
    \end{tabular}
\end{table}
}
\vspace{20pt}

Indeed, it is easy to check that these are relations between the columns of \eqref{g24-1}. Moreover, when plugging in $(x,y)=(1,1)$, the minor of the first two columns is nonzero, hence these relations are independent. Finally, these $2$ relations have the sum of degrees equal to $6t+a+b = d$, so they generate the module of relations. 

Decompose $S = S' \oplus S''$, where $S'$ corresponds to the first row and $S''$ corresponds to the second row. We derive a variant of the exact sequence \eqref{exact-seq}:
\begin{align*}
    0 \rightarrow S'^* \otimes Q \rightarrow N_{\mathbb{P}^1/G(k,n)} \rightarrow  (S''^* \otimes Q) / {T_{\mathbb{P}^1}}  \rightarrow 0
\end{align*}

Using \eqref{par-der}, the map 
$ T_{\mathbb{P}^1} \rightarrow S''^* \otimes Q$ is given by $2$ polynomials of degrees $4t+a-1, 4t+b-1$, respectively, say $q_{12}, q_{22}$.
{\allowdisplaybreaks
\begin{align*}
    q_{12} &= \frac{1}{x}\left\{-x^{t+1}y^{2t+a-1}(t+1)y^{t} + x^{t+2}y^{2t+a-2}(x^{t} + txy^{t-1}) \right\} \\
&= -(t+1)x^{t}y^{3t+a-1} +  x^{2t+1}y^{2t+a-2} + tx^{t+2}y^{3t+a-3} \\
&= x^{t}y^{2t+a-2} \left[x^{t+1} + tx^2y^{t-1}-(t+1)y^{t+1} \right] \\
q_{22} &= \frac{1}{x}\{[-x^{3t+b-1}y -x^{t}y^{2t+b} - (xy^{3t+b-1}+x^3 y^{3t+b-3}+\cdots + x^{2q+1} y^{3t+b-1-2q})](t+1)y^{t} \\
&\qquad+ [x^{3t+b}+x^{t+1} y^{2t+b-1} + (y^{3t+b} + x^2y^{3t+b-2}+\cdots + x^{2q+2} y^{3t+b-2-2q})] \cdot [x^{t} + txy^{t-1}]\} \\
&= - (t+1)x^{3t+b-2}y^{t+1} - (t+1)x^{t-1}y^{3t+b} - (t+1)(y^{4t+b-1} + x^2y^{4t+b-3}+\cdots + x^{2q} y^{4t+b-2q-1}) \\
& \qquad + x^{4t+b-1} + x^{2t}y^{2t+b-1} + (x^{t-1}y^{3t+b} + x^{t+1}y^{3t+b-2} + \cdots + x^{t+2q+1}y^{3t+b-2q-2}) \\
&\qquad +  t(x^{3t+b}y^{t-1} + x^{t+1}y^{3t+b-2}) + t(y^{4t+b-1} + x^2y^{4t+b-3}+\cdots + x^{2q+2} y^{4t+b-2q-3})\\
&= x^{4t+b-1} + tx^{3t+b}y^{t-1} - (t+1)x^{3t+b-2}y^{t+1} + x^{2t}y^{2t+b-1} +  tx^{t+1}y^{3t+b-2} - (t+1)x^{t-1}y^{3t+b} + tx^{2q+2} y^{4t+b-2q-3} \\
& \qquad + (x^{t-1}y^{3t+b} + x^{t+1}y^{3t+b-2} + \cdots + x^{t+2q+1}y^{3t+b-2q-2}) - (y^{4t+b-1} + x^2y^{4t+b-3}+\cdots + x^{2q} y^{4t+b-2q-1}) \\
&= \left[x^{t+1} + tx^2y^{t-1}-(t+1)y^{t+1} \right] \cdot [x^{3t+b-2} + x^{t-1}y^{2t+b-1} + (x^{2q}y^{3t+b-2q-2} + x^{2q-2}y^{3t+b-2q} + \cdots + y^{3t+b-2})] \\
& \qquad + x^{t-1}y^{3t+b} + ty^{4t+b-1}
\end{align*}
}
Since coefficients of $x^{4t+b-1}$ and $y^{4t+b-1}$ in $q_{22}$ are nonzero, we have $\gcd(q_{22},x) = \gcd(q_{22},y) = 1$. Thus
\begin{equation}
\begin{aligned}
\notag
\gcd(q_{12},q_{22}) &= \gcd(x^{t+1} + tx^2y^{t-1}-(t+1)y^{t+1}, q_{22}) \\
&= \gcd(x^{t+1} + tx^2y^{t-1}-(t+1)y^{t+1}, x^{t-1}y^{3t+b} + ty^{4t+b-1}) \\
&= \gcd(x^{t+1} + tx^2y^{t-1}-(t+1)y^{t+1}, x^{t-1} + ty^{t-1}) \\
&= \gcd(-(t+1)y^{t+1}, x^{t-1} + ty^{t-1}) \\
&= 1
\end{aligned}
\end{equation}

 Since $\gcd(q_{12},q_{22}) = 1$, the map \eqref{g24-1} is unramified. We have the following relation of degree $8t+r-2$:
$$q_{12}q_{22} - q_{22}q_{12} = 0.$$

This gives $(S''^* \otimes Q)/T_{\mathbb{P}_1} \simeq \mathcal{O}(8t+r)$. On the other hand, $S'^* \otimes Q\simeq \mathcal{O}(8t+r-1+a) \oplus \mathcal{O}(8t+r-1+b)$. One can easily check that the degrees of the summands of $S'^* \otimes Q$ and $(S''^* \otimes Q)/T_{\mathbb{P}_1}$ are within $1$ of each other, hence so are the degrees of the summands of the normal bundle.
\end{case}

\begin{case} $r = 5$, $t \geq 1$
\normalfont

Consider the map
\begin{equation}\label{g24-2}
[x:y] \longmapsto 
\begin{bmatrix}
x^{5t+3} & y^{5t+3} & x^{3t+4}y^{2t-1}+xy^{5t+2} & xy^{5t+2}\\
y^{t+2} & 0 & x^{t+1}y+x^2 y^{t} & x^{t+2}
\end{bmatrix}
\end{equation}

The rows of the matrix are linearly independent as the minor of the first 2 columns is a power of $y$, and when plugging in $y=0$, the minor of the first and last columns is a power of $x$.

We claim the module of relations between the columns of \eqref{g24-2}, which corresponds to the universal quotient bundle $Q$, is generated by the following relations:
\begin{center}
\begin{tabular}{c c c c}
\multirow{2}{3cm}[1ex]{$-x^{t+4}y^{2t-1} \newline+ x^{t+1}y^{2t+2} + x^2 y^{3t+1}$} &  \multirow{3}{5.4cm}[1ex]{$x^{3t+3} + x^{3t+1}y^2 - x^{3t}y^3 + x^{2t+4}y^{t-1} \newline+ x^{2t+2}y^{t+1} - 2x^{2t+1}y^{t+2} - x^{t+2}y^{2t+1}\newline -x^3 y^{3t} + xy^{3t+2}$} & \multirow{2}{2.6cm}[1ex]{$x^{3t+3} - x^{3t} y^3 \newline- x^{2t+1} y^{t+2} - y^{3t+3}$} &  \multirow{3}{3cm}[1ex]{$-x^{3t+2}y + x^{3t-1}y^4 \newline- x^{2t+3}y^{t} + 2x^{2t}y^{t+3} \newline+ x^{t+1}y^{2t+2} + x^2 y^{3t+1} $}\strutA \\[0.8cm]\hline
$-x^{t+2}y^{2t}$ & $-x^{3t+2}+x^{3t+1}y + x^{2t+2}y^t-xy^{3t+1}$ &  $x^{3t+1}y $ & $-x^{3t} y^2- x^{2t+1}y^{t+1}  + y^{3t+2}$\strutA
\end{tabular}
\end{center}

Indeed, it is easy to check that these are relations between the columns of \eqref{g24-2}. Moreover, when plugging in $(x,y)=(1,1)$, the minor of columns $1,3$ is nonzero, hence these relations are independent. Finally, these $2$ relations have the sum of degrees equal to $d = 6t+5$, so they generate the module of relations.

Decompose $Q = Q' \oplus Q''$, where $Q'$ corresponds to the first row and $Q''$ corresponds to the second row. Decompose $S = S' \oplus S''$, where $S'$ corresponds to the first row and $S''$ corresponds to the second row of the matrix \eqref{g24-2}. We derive a variant of the exact sequence \eqref{exact-seq}:
\begin{align*}
    0 \rightarrow S'^* \otimes Q' \rightarrow N_{\mathbb{P}^1/G(k,n)} \rightarrow  (S''^* \otimes Q' \oplus S^* \otimes Q'') / {T_{\mathbb{P}^1}}  \rightarrow 0
\end{align*}

Using $\eqref{par-der}$, the map $ T_{\mathbb{P}^1} \rightarrow S''^* \otimes Q' \oplus S^* \otimes Q''$ is given by 3 polynomials of degrees $8t+3, 4t+3, 4t+2$, respectively, say $q_{21}, q_{12}, q_{22}$. 
{\allowdisplaybreaks
\begin{align*}
q_{21} &= \frac{1}{y}\{-x^{t+2}y^{2t}(5t+3)x^{5t+2} + x^{3t+1}y\left[(3t+4)x^{3t+3}y^{2t-1} + y^{5t+2} \right] + (-x^{3t}y^2 - x^{2t+1}y^{t+1}+y^{3t+2})y^{5t+2} \}\\
&= -(2t-1)x^{6t+4}y^{2t-1} + x^{3t+1}y^{5t+2} -x^{3t}y^{5t+3} - x^{2t+1}y^{6t+2} + y^{8t+3} \\
q_{22} &= \frac{1}{y}\{ x^{3t+1}y [(t+1)x^ty+2xy^t] + (- x^{3t} y^2- x^{2t+1}y^{t+1} + y^{3t+2})(t+2)x^{t+1}\} \\
&= (t+1)x^{4t+1}y + 2x^{3t+2}y^t - (t+2)x^{4t+1}y - (t+2)x^{3t+2}y^t + (t+2)x^{t+1}y^{3t+1} \\
&= -x^{4t+1}y - tx^{3t+2}y^t + (t+2)x^{t+1}y^{3t+1}  \\
&= -x^{t+1}y \left[x^{3t} + tx^{2t+1}y^{t-1} - (t+2)y^{3t}\right] \\
q_{12} &= \frac{1}{y}\{ (x^{3t+3} - x^{3t} y^3 - x^{2t+1} y^{t+2} - y^{3t+3})[(t+1)x^ty+2xy^t]  \\
&\qquad + (-x^{3t+2}y + x^{3t-1}y^4 - x^{2t+3}y^{t} + 2x^{2t}y^{t+3} + x^{t+1}y^{2t+2} + x^2 y^{3t+1})(t+2)x^{t+1}\}  \\
&= -x\,\big[x^{4t+2}- x^{4t-1}y^3 + tx^{3t+3}y^{t-1} -(t+1)x^{3t}y^{t+2} - tx^{2t+1}y^{2t+1} - (t+2)x^{t+2}y^{3t} + (t+1)x^{t-1} y^{3t+3} + 2y^{4t+2} \big]  \\
&= -x\left[(x^{3t} + tx^{2t+1}y^{t-1} - (t+2)y^{3t}) (x^{t+2}-x^{t-1}y^3-y^{t+2}) - x^{t-1}y^{3t+3}-ty^{4t+2}\right]
\end{align*}
}

We have 
{\allowdisplaybreaks
\begin{align*}
\notag
&\gcd(q_{12},q_{22}) \\&= x \cdot \gcd(x^{3t} + tx^{2t+1}y^{t-1} - (t+2)y^{3t}, (x^{3t} + tx^{2t+1}y^{t-1} - (t+2)y^{3t} )(x^{t+2}-x^{t-1}y^3-y^{t+2}) - x^{t-1}y^{3t+3}-ty^{4t+2} ) \\
&= x \cdot \gcd(x^{3t} + tx^{2t+1}y^{t-1} - (t+2)y^{3t}, \; - x^{t-1}y^{3t+3}-ty^{4t+2} ) \\
&= x \cdot \gcd(x^{3t} + tx^{2t+1}y^{t-1} - (t+2)y^{3t}, \; - (x^{t-1}+ty^{t-1})y^{3t+3} ) \\
&= x \cdot \gcd(x^{3t} + tx^{2t+1}y^{t-1} - (t+2)y^{3t}, \; x^{t-1}+ty^{t-1} ) \\
&= x \cdot \gcd(x^{2t+1}( x^{t-1}+ty^{t-1}) - (t+2)y^{3t}, \; x^{t-1}+ty^{t-1} ) \\
&= x \cdot \gcd(- (t+2)y^{3t}, \; x^{t-1}+ty^{t-1} ) \\
&= x
\end{align*}
}

We have $x \nmid q_{21}$, so the map $\eqref{g24-2}$ is unramified. At $x=0$, the fiber of the map $T_{\mathbb{P}^1} \to S''^* \otimes Q' \oplus S^* \otimes Q''$ is contained in the fiber of $S''^* \otimes Q'$, so we have the exact sequence
\begin{align*}
    0 \rightarrow (S''^* \otimes Q')[x=0] \rightarrow  (S''^* \otimes Q' \oplus S^* \otimes Q'') / {T_{\mathbb{P}^1}} \rightarrow (S^* \otimes Q'') / {T_{\mathbb{P}^1}} \rightarrow 0
\end{align*}

The map $T_{\mathbb{P}^1} \rightarrow S^* \otimes Q''$ is given by $q_{12}$ and $q_{22}$. Write $q_{12} = x q_{12}'$, $q_{22} = x q_{22}'$, then $\gcd(q_{12}',q_{22}') = 1$. We have a relation of degree $8t+4$: $q_{12}'q_{22} - q_{22}'q_{12} = 0.$ This gives $(S^* \otimes Q'')/T_{\mathbb{P}_1} \simeq \mathcal{O}(8t+6)$. We also have $(S''^* \otimes Q')(x)\simeq \mathcal{O}(8t+6)$, so by {Lemma \ref{deg-exact-seq}},  $(S''^* \otimes Q' \oplus S^* \otimes Q'') / {T_{\mathbb{P}^1}} \simeq O(8t+6)^2$. On the other hand, $S'^* \otimes Q'\simeq \mathcal{O}(8t+6)$. Hence, all the degrees of the summands of $S'^* \otimes Q'$ and $(S''^* \otimes Q' \oplus S^* \otimes Q'')/T_{\mathbb{P}_1}$ are equal, hence so are the degrees of the summands of the normal bundle.

\end{case}
\begin{case}
    $d \leq 5$
    \begin{itemize} \normalfont
        \item[(3.1)] $d = 2$:
        Consider the map \begin{equation}\label{g24-d=2}
[x:y] \longmapsto 
\begin{bmatrix}
x & y & 0 & 0 \\
y & 0 & x & y
\end{bmatrix}
\end{equation}
The rows of the matrix are linearly independent as the minor of columns $2,4$ is a power of $y$, and the minor of columns $1,3$ is a power of $x$.

We claim the module of relations between the columns of \eqref{g24-d=2}, which corresponds to the universal quotient bundle $Q$, is generated by the following relations:
$$\begin{tabular}{c c c c}
$-y $& $x$ & 0 & $y$\\\hline
0 & 0 & $-y$ & $x$
\end{tabular}$$
Indeed, it is easy to check that these are relations between the columns of \eqref{g24-d=2}. Moreover, the minor of columns $1,3$ is a power of $y$, and the minor of columns $2,4$ is a power of $x$, hence these relations are independent everywhere, and so they generate the module of relations. 

Using \eqref{par-der}, the map $T_{\mathbb{P}^1} \to S^* \otimes Q$ is given by $4$ polynomials $1,0,0,1$, which cannot vanish simultaneously for any $(x,y)$, so the map \eqref{g24-d=2} is unramified. Now since $S^* \otimes Q \simeq \mathcal{O}(2)^4$, we have the exact sequence
$$0 \rightarrow \mathcal{O}(2) \rightarrow \mathcal{O}(2)^4 \rightarrow N_{\mathbb{P}^1/G(2,4)} \rightarrow 0$$
Hence, the normal bundle $N_{\mathbb{P}^1/G(2,4)} \simeq \mathcal{O}(2)^3$ is balanced.
        \item[(3.2)] $d = 3$: Consider the map 
    \begin{equation}\label{g24-d=3}
[x:y] \longmapsto 
\begin{bmatrix}
x^2 & y^{2} & xy & 0 \\
y & 0 & x & y
\end{bmatrix}
\end{equation}
The rows of the matrix are linearly independent as the minor of columns $2,4$ is a power of $y$, and when plugging in $y=0,$ the minor of columns $1,3$ is a power of $x$.

We claim the module of relations between the columns of \eqref{g24-d=3}, which corresponds to the universal quotient bundle $Q$, is generated by the following relations:
$$\begin{tabular}{c c c c}
$-y^2$ & $x^2$ & 0 & $y^2$\\\hline
0 & $x$ & $-y$ & $x$
\end{tabular}$$
Indeed, it is easy to check that these are relations between the columns of \eqref{g24-d=3}. Moreover, the minor of columns $1,3$ is a power of $y$, and when plugging in $y=0$, the minor of columns $2,4$ is a power of $x$, hence these relations are independent everywhere, and so they generate the module of relations. 

Decompose $Q = Q' \oplus Q''$, where $Q'$ corresponds to the first row and $Q''$ corresponds to the second row. Using \eqref{par-der}, the map $T_{\mathbb{P}^1} \to S^* \otimes Q''$ is given by $2$ polynomials of degrees $1,0$, that is $y, 1$, respectively, which cannot vanish simultaneously for any $(x,y)$, so the map \eqref{g24-d=3} is unramified. Moreover, there is a relation of degree $1$ between these $2$ polynomials: $1\cdot y - y \cdot 1 = 0$, so $(S^* \otimes Q'')/T_{\mathbb{P}^1} \simeq \mathcal{O}(3)$. On the other hand, $S^* \otimes Q' \simeq \mathcal{O}(4) \oplus \mathcal{O}(3)$. Hence, the normal bundle is balanced.

        \item[(3.3)] $d = 4$: Consider the map
        \begin{equation}\label{g24-d=4}
[x:y] \longmapsto 
\begin{bmatrix}
x^3 & y^{3} & x^2y & 0 \\
0 & 0 & x & y
\end{bmatrix}
\end{equation}

The rows of the matrix are linearly independent as the minor of columns $2,4$ is a power of $y$, and the minor of columns $1,3$ is a power of $x$.

We claim the module of relations between the columns of \eqref{g24-d=4}, which corresponds to the universal quotient bundle $Q$, is generated by the following relations:
$$\begin{tabular}{c c c c}
$y^2$ & $0$ & $-xy$ & $x^2$\\\hline
0 & $x^2$ & $-y^2$ & $xy$
\end{tabular}$$
Indeed, it is easy to check that these are relations between the columns of \eqref{g24-d=4}. Moreover, the minor of columns $1,3$ is a power of $y$, and the minor of columns $2,4$ is a power of $x$, hence these relations are independent everywhere, and so they generate the module of relations. 

Decompose $S = S' \oplus S''$, where $S'$ corresponds to the first row and $S''$ corresponds to the second row. Using \eqref{par-der}, the map $T_{\mathbb{P}^1} \to S''^* \otimes Q$ is given by $2$ polynomials of degree $1$, that is $x,y$, which cannot vanish simultaneously when $(x,y) \neq (0,0)$, so the map \eqref{g24-d=4} is unramified. Moreover, there is a relation of degree $2$ between these $2$ polynomials: $y\cdot x - x \cdot y = 0$, so $(S''^* \otimes Q)/T_{\mathbb{P}^1} \simeq \mathcal{O}(4)$. On the other hand, $S^* \otimes Q' \simeq \mathcal{O}(5)^2$. Hence, the normal bundle is balanced.

        \item[(3.4)] $d = 5$: Consider the map
        \begin{equation}\label{g24-d=5}
[x:y] \longmapsto 
\begin{bmatrix}
x^3 & y^{3} & 0 & x^{2}y+xy^2 \\
y^2 & 0 & x^{2} & xy
\end{bmatrix}
\end{equation}

The rows of the matrix are linearly independent as the minor of columns $1,2$ is a power of $y$, and the minor of columns $1,3$ is a power of $x$.

We claim the module of relations between the columns of \eqref{g24-d=5}, which corresponds to the universal quotient bundle $Q$, is generated by the following relations:
$$\begin{tabular}{c c c c}
$0$ & $x^3+x^2y$ & $y^3$ & $-xy^2$\\\hline
$-xy$ & $-xy$ & $-xy+y^2$ & $x^2-xy+y^2$
\end{tabular}$$
Indeed, it is easy to check that these are relations between the columns of \eqref{g24-d=5}. Moreover, the minor of columns $2,4$ is a power of $x$, and when plugging in $x=0$, the minor of columns $3,4$ is a power of $y$, hence these relations are independent everywhere, and so they generate the module of relations. 

Decompose $Q = Q' \oplus Q''$, where $Q'$ corresponds to the first row and $Q''$ corresponds to the second row. Decompose $S = S' \oplus S''$, where $S'$ corresponds to the first row and $S''$ corresponds to the second row. Similar to case 2, using \eqref{par-der}, the map $T_{\mathbb{P}^1} \to S''^* \otimes Q' \oplus S^* \otimes Q''$ is given by $3$ polynomials of degrees $3,3,2$, that is $q_{21}=-x^3-x^2y+xy^2+y^3, q_{12}=xy^2, q_{22}=-x^2+xy+y^2$, respectively, which cannot vanish simultaneously when $(x,y) \neq (0,0)$, so the map \eqref{g24-d=5} is unramified. We have the following relations among these polynomials: 
\begin{align*}
\begin{tabular}{c | c | c }
    $q_{21}$ & $q_{12}$ & $q_{22}$ \\\hline
    $x$ & $2x+y$ & $-x^2-2xy$ \\
    $-y$ & $x+y$ & $y^2+xy$ 
\end{tabular}
\end{align*}

These $2$ relations are independent because the minor of columns $1,2$ is $x^2+3xy+y^2$ and the minor of columns $1,3$ is $xy^2$, which can both vanish only if $(x,y)=(0,0)$. Thus, we have $2$ independent relations of degree $4$, which gives $(S''^* \otimes Q' \oplus S^* \otimes Q'')/T_{\mathbb{P}^1} \simeq \mathcal{O}(6)^2$. On the other hand, $S'^* \otimes Q' \simeq \mathcal{O}(6)$. Hence, the normal bundle is balanced.
    \end{itemize}
\end{case}
\end{proof}
\subsection{$G(2,5)$}

\begin{thm}\label{g(2,5)}
In any characteristic, the normal bundle of a general rational curve of any degree $d$ in the Grassmannian $G(2,5)$ is balanced.
\end{thm}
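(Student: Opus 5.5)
Since balancedness is an open condition, I will exhibit, for each degree $d$, one explicit unramified map $\mathbb{P}^1\to G(2,5)$ of degree $d$ whose normal bundle is balanced, following the template of Theorem \ref{g24thm}. First I check that no exception in Conjecture \ref{conjecture} applies. Here $\min(k,n-k)=2$ and $\max=3$, so (i) is vacuous for integers. Case (iii) does not arise since $k=2$. For (ii), write $d=2q_1+r_1$ and $d=3q_2+r_2$; we need $r_1r_2\neq 0$ and $q_1+q_2\le (2-r_1)(3-r_2)$. Taking $r_1=1$ forces $q_1+q_2\le 3-r_2\le 2$, so $d$ is small, and these cases are checked by hand. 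The normal bundle has rank $6\cdot 1-1=5$ and degree $\deg T_{G(2,5)}|_C-2=5d-2$. So the target splitting is determined by writing $5d-2=5m+s$ with $0\le s<5$, and the target type is $\mathcal{O}(m+1)^s\oplus\mathcal{O}(m)^{5-s}$. This suggests splitting into residues $d \bmod 10$ (or $d=10t+r$), together with finitely many small $d$.

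For each residue class I will write a $2\times 5$ matrix of monomial-like entries, as in \eqref{g24-1}. The row degrees $d_1,d_2$ are chosen so that $S^*=\mathcal{O}(d_1)\oplus\mathcal{O}(d_2)$. The entries are chosen so that $Q$ has the three relation degrees $e_1,e_2,e_3$ with $\sum e_i=d$ that make most of $S^*\otimes Q$ land in $\{m,m+1\}$. I verify that the rows are independent using two minors that are pure powers of $x$ and $y$. I verify that the three listed relations are relations, that they are independent at a point, and that their degrees sum to $d$, so they generate $Q$.

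Next I decompose $S=S'\oplus S''$ and/or $Q=Q'\oplus Q''$ so that $S'^*\otimes Q'$ (or the analogous piece) is already balanced with degrees in $\{m,m+1\}$. The piece $T_{\mathbb{P}^1}$ is then absorbed into a small quotient part, given by two or three polynomials computed via \eqref{par-der}. For two polynomials I show they are coprime using a Euclidean-style gcd computation like the one in Case 1 of Theorem \ref{g24thm}; this gives both unramifiedness and the degree of the quotient line bundle. For three polynomials I exhibit two independent syzygies, either directly or via the auxiliary exact sequence from Case 2 when a common factor such as $x$ appears. Lemma \ref{deg-exact-seq} then gives balancedness.

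The main obstacle is making this work in every characteristic. The gcd computations in Theorem \ref{g24thm} used coefficients like $t$, $t+1$, and $t+2$, which can vanish mod $p$. I would therefore choose the matrices so that all coefficients produced by differentiation are $\pm1$, i.e., the resulting polynomials have unit extreme coefficients. Where that is impossible, I would split further by $t \bmod p$, or arrange the two polynomials as $x^a$ and $y^b$ plus corrections so that coprimality is characteristic-free. The finitely many small $d$ I handle one by one with explicit low-degree matrices, as in Case 3.
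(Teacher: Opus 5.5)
\textbf{There is a genuine gap: no curve is ever constructed.} You set up the right framework: exhibit one unramified curve, split off a piece of $S^*\otimes Q$ that is already balanced, absorb $T_{\mathbb{P}^1}$ into the rest, and finish with Lemma \ref{deg-exact-seq}. But the proposal stops where the proof begins. The theorem is an existence statement proved by explicit construction, and you never write down a matrix, its module of relations, or the polynomials describing $T_{\mathbb{P}^1}\to S^*\otimes Q''$.

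In particular, validity in every characteristic is what distinguishes this statement from Theorems \ref{g24thm} and \ref{g26thm}. You handle it only by hoping to choose matrices whose differentiated coefficients are $\pm1$. Your fallback of splitting by $t \bmod p$ does not yield a proof: the claim is for all primes at once, so you would need infinitely many separate families. The failure is also not cosmetic. In Case 1 of Theorem \ref{g24thm}, if $p\mid t+1$ then $x^{t-1}-y^{t-1}$ divides both $q_{12}$ and $q_{22}$, so the gcd step, and with it the identification of the quotient as a line bundle of the expected degree, breaks down.

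\textbf{The missing idea} is to make the piece that absorbs $T_{\mathbb{P}^1}$ come from a \emph{linear} row, so that differentiation only produces the coefficients $0$ and $1$. The paper passes to $G(3,5)\simeq G(2,5)$ and uses $3\times 5$ matrices as follows:
\begin{itemize}
\item The last row is $(0,x,0,y,x)$.
\item The other two rows have degrees $t,t-1$ if $d=2t$, and $t,t$ if $d=2t+1$.
\item $Q$ is generated by two relations of degrees $\lfloor d/2\rfloor$ and $\lceil d/2\rceil$.
\end{itemize}
Taking $S''$ to be the linear row, \eqref{par-der} gives the two components $x^{t-1}$ and $(x^t-y^t)/(x-y)$ (resp.\ $(x^{t+1}-y^{t+1})/(x-y)$). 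These are coprime over every field, since the second has coefficient $1$ on the pure power of $y$. Hence $(S''^*\otimes Q)/T_{\mathbb{P}^1}\simeq\mathcal{O}(d)$, while $S'^*\otimes Q$ has all degrees in $\{d-1,d\}$.

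In your $2\times 5$ language this amounts to the following. Take $S^*$ balanced, and let $Q$ have relation degrees $\lfloor (d-1)/2\rfloor$, $\lceil (d-1)/2\rceil$ and $1$. Let $Q''$ be the degree-one relation $g_3$. Compute the components as $-\frac{1}{x}\sum_j f_{ij}\,\partial g_{3j}/\partial y$, using $\sum_j g_{3j}f_{ij}=0$.

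\textbf{Two smaller points.}
\begin{itemize}
\item Since $5d-2\equiv 3 \pmod 5$ for every $d$, the target is always $\mathcal{O}(d-1)^2\oplus\mathcal{O}(d)^3$. Parity of $d$ therefore suffices, and residues mod $10$ are unnecessary.
\item Your check of exception (ii) should conclude that no degree is exceptional, rather than deferring small cases: $d=3$ has $r_2=0$, and $d=5$ has $q_1+q_2=3>1$.
\end{itemize}
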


\begin{proof}
The theorem has been proven for $d = 1$. Write $d = 2t + r$, $r \in \{0,1\}$. We will divide into the 2 following cases:

\setcounter{case}{0}
\begin{case} $r = 0$
\normalfont

Since $G(2,5) \simeq G(3,5)$, we will find a rational curve in $G(3,5)$ whose normal bundle is balanced.
Consider the map
\begin{equation}\label{g25-1}
[x:y] \longmapsto 
\begin{bmatrix}
x^{t} & y^{t} & x^{t-1}y & xy^{t-1} & x^{t}\\
0 & x^{t-1} & y^{t-1} & x^{t-1} & 0 \\
0 & x & 0 & y & x 
\end{bmatrix}
\end{equation}

The rows of the matrix are linearly independent as the minor of columns $1,2,5$ is a power of $x$, and when plugging in $x=0$, the minor of columns $2,3,4$ is a power of $y$.

We claim the module of relations between the columns of \eqref{g25-1}, which corresponds to the universal quotient bundle $Q$, is generated by the following relations:

\begin{center}
    
\begin{tabular}{c c c c c}
$-xy^{t-1}+y^t$ & $-x^{t}$ &  $0$ & $x^{t}$ & $x^t - x^{t-1}y $\\[1pt]
\hline
\\[-6pt]
$x^{t-1}y-xy^{t-1}$ & $-x^2\frac{x^{t-1}-y^{t-1}}{x-y}$ &  $-x^{t}$ & $x\frac{x^{t}-y^{t}}{x-y}$ & $x^t-xy^{t-1}-y^t$
\end{tabular}
\end{center}

Indeed, it is easy to check that these are relations between the columns of \eqref{g25-1}. Moreover, when plugging in $(x,y)=(1,0)$, the minor of columns $2,3$ is nonzero, hence these relations are independent. Finally, these $2$ relations have the sum of degrees equal to $d=2t$, so they generate the module of relations. 

Decompose $S = S' \oplus S''$, where $S'$ corresponds to the first 2 rows and $S''$ corresponds to the last row. Using \eqref{par-der}, the map 
$ T_{\mathbb{P}^1} \rightarrow S''^* \otimes Q$ is given by $2$ polynomials of degrees $t-1$ each, say $q_{31}$ and $q_{32}$:
\begin{align*}
    &q_{31} = x^{t-1}\\
    &q_{32} = \frac{x^t-y^t}{x-y}
\end{align*}

Since $\gcd(q_{31},q_{32})=1$, the map \eqref{g25-1} is unramified. We have the following relation of degree $2t-2$:
$$\frac{x^t-y^t}{x-y}q_{31} + (-x^{t-1})q_{32} = 0.$$

This gives $(S''^* \otimes Q)/T_{\mathbb{P}_1} \simeq \mathcal{O}(2t)$. On the other hand, $S'^* \otimes Q \simeq \mathcal{O}(2t)^2 \oplus \mathcal{O}(2t-1)^2$.
One can easily check that the degrees of the summands of $S'^* \otimes Q$ and $(S''^* \otimes Q)/T_{\mathbb{P}_1}$ are within $1$ of each other, hence so are the degrees of the summands of the normal bundle.
\end{case}

\begin{case} $r = 1$
\normalfont

Consider the map
\begin{equation}\label{g25-2}
[x:y] \longmapsto 
\begin{bmatrix}
x^{t} & y^{t} & x^{t-1}y & xy^{t-1} & 0\\
0 & x^{t} & y^{t} & x^{t} & 0 \\
0 & x & 0 & y & x 
\end{bmatrix}
\end{equation}

The rows of the matrix are linearly independent as the minor of columns $1,2,5$ is a power of $x$, and when plugging in $x=0$, the minor of columns $2,3,4$ is a power of $y$.

We claim the module of relations between the columns of \eqref{g25-2}, which corresponds to the universal quotient bundle $Q$, is generated by the following relations:

\begin{center}
    
\begin{tabular}{c c c c c}
$-xy^{t-1}+y^t$ & $-x^{t}$ &  $0$ & $x^{t}$ & $x^t - x^{t-1}y $\\[1pt]
\hline
\\[-6pt]
$x^{t}y-x^2y^{t-1}$ & $-x^2\frac{x^{t}-y^{t}}{x-y}$ &  $-x^{t+1}$ & $x\frac{x^{t+1}-y^{t+1}}{x-y}$ & $x^{t+1}-xy^{t}-y^{t+1}$
\end{tabular}
\end{center}

Indeed, it is easy to check that these are relations between the columns of \eqref{g25-2}. Moreover, when plugging in $(x,y)=(1,0)$, the minor of columns $2,3$ is nonzero, hence these relations are independent. Finally, these $2$ relations have the sum of degrees equal to $d=2t+1$, so they generate the module of relations.

Decompose $S = S' \oplus S''$, where $S'$ corresponds to the first 2 rows and $S''$ corresponds to the last row. Using \eqref{par-der}, the map 
$ T_{\mathbb{P}^1} \rightarrow S''^* \otimes Q$ is given by $2$ polynomials of degrees $t-1,t$, respectively, say $q_{31}$ and $q_{32}$:
\begin{align*}
    &q_{31} = x^{t-1}\\
    &q_{32} = \frac{x^{t+1}-y^{t+1}}{x-y}
\end{align*}

Since $\gcd(q_{31},q_{32})=1$, the map \eqref{g25-2} is unramified. We have the following relation of degree $2t-1$:
$$\frac{x^{t+1}-y^{t+1}}{x-y}q_{31} -x^{t-1} q_{32} = 0.$$

This gives $(S''^* \otimes Q)/T_{\mathbb{P}_1} \simeq \mathcal{O}(2t+1)$. On the other hand, $S'^* \otimes Q \simeq \mathcal{O}(2t)^2 \oplus \mathcal{O}(2t+1)^2$.
One can easily check that the degrees of the summands of $S'^* \otimes Q$ and $(S''^* \otimes Q)/T_{\mathbb{P}_1}$ are within $1$ of each other, hence so are the degrees of the summands of the normal bundle.
\end{case}
\end{proof}
\subsection{$G(2,6)$}
\begin{thm}\label{g26thm}
In characteristic $0$, the normal bundle of a general rational curve of any degree $d \notin \{3,5\}$ in the
Grassmannian $G(2, 6)$ is balanced.
\end{thm}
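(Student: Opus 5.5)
For $G(2,6)$ we have $k=2$ and $n-k=4$. Condition (i) of Conjecture \ref{conjecture} excludes $d=3$, since $2<3<4$. For condition (ii), write $d=2q_1+r_1$ and $d=4q_2+r_2$. We need $r_1\neq 0$, so $d$ is odd and then automatically $r_2\neq 0$. The inequality $q_1+q_2\le (2-r_1)(4-r_2)=4-r_2$ holds for $d=5$ ($q_1=2$, $q_2=1$, $r_2=1$, giving $3\le 3$) and fails for all odd $d\ge 7$. So $\{3,5\}$ are exactly the exceptions, and it suffices, by openness of balancedness, to exhibit for every other $d$ one explicit unramified map $\mathbb{P}^1\to G(2,6)$ of degree $d$ whose normal bundle is balanced.

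I follow the method of Section 2 and the $G(2,4)$ argument.

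\textbf{Generic degrees.} Write $d=4t+r$, or a finer residue modulo a small period (the analogue of $6t+r$ in $G(2,4)$) if needed. Take a $2\times 6$ matrix with rows of degrees $d_1=\lceil d/2\rceil$ and $d_2=\lfloor d/2\rfloor$, so that $S^*$ is balanced. Choose the entries, mostly monomials with a few perturbing terms, so that:
\begin{itemize}
\item one $2\times2$ minor is a power of $y$ and another becomes a power of $x$ at $y=0$, which gives rank $2$ everywhere;
\item there are four explicit column relations, of degrees as balanced as possible, summing to $d$.
\end{itemize}
As in the earlier proofs, these four relations generate $Q$ once I check that they are relations, that they are independent at one point such as $(1,1)$, and that their degrees sum to $d$.

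Then split $S=S'\oplus S''$ along the rows, or additionally split $Q=Q'\oplus Q''$. This gives the variant of \eqref{exact-seq}
$$0\to S'^*\otimes Q\to N\to (S''^*\otimes Q)/T_{\mathbb{P}^1}\to 0.$$
The first term has degrees $d_1+\deg Q_l$, all within $1$ of each other. Using \eqref{par-der}, the map $T_{\mathbb{P}^1}\to S''^*\otimes Q$ is given by four polynomials $q_{2l}$. I need to:
\begin{itemize}
\item show they have no common zero, which gives unramifiedness;
\item find three independent syzygies among them of nearly equal degree.
\end{itemize}
The quotient is then balanced with the same slope window, and Lemma \ref{deg-exact-seq} concludes. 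In practice I would design the matrix so that several $q_{2l}$ are monomials, or share a visible factor as in the $G(2,4)$ case. Then two of the syzygies are Koszul-type relations between pairs, and the third comes from a $\gcd$ computation in the style of the one for $q_{12},q_{22}$ above.

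\textbf{Small degrees.} The remaining values $d=2,4$ and $d\in\{6,7,8,\dots\}$, up to the point where the general family starts, are handled by explicit matrices with hand-computed relation modules, exactly as in Case 3 of Theorem \ref{g24thm}. For example, for $d=2$ take rows $(x,y,0,0,0,0)$ and $(0,0,x,y,\ast,\ast)$, suitably glued so the curve is nondegenerate, and check $N\simeq\mathcal O(2)^{7}$ directly.

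\textbf{Main obstacle.} The hard step is the syzygy computation for $T_{\mathbb{P}^1}\to S''^*\otimes Q$ in the generic families: the three relations among the $q_{2l}$ must have the right degrees, and not just the gcd condition. Characteristic $0$ enters here, because coefficients like $t$, $t+1$, $t+2$ must be nonzero for the gcd to come out trivial. My first attempt would be to arrange $Q$ so that two of its generators pair with $S''^*$ through monomial entries. This makes two $q_{2l}$ coprime monomials $x^a$, $y^b$, and reduces the problem to a two-polynomial gcd computation as in $G(2,4)$. The other summands can then be pushed into $S'^*\otimes Q'$, with a torsion correction at $x=0$ as in Case 2 if a common factor appears.
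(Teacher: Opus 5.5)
There is a genuine gap, and it is in the degree bookkeeping rather than in the computations you defer. Your identification of $\{3,5\}$ and the overall framework (openness, generation of $Q$ by independence plus a degree count, Lemma~\ref{deg-exact-seq}) match the paper. The problem is the design you fix: rows of degrees $\lceil d/2\rceil,\lfloor d/2\rfloor$ and $Q$ as balanced as possible. With that design, the exact sequence you propose cannot certify balancedness.

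Lemma~\ref{deg-exact-seq} gives a balanced $N$ only if all summands of the sub and of the quotient lie in one interval $[m,m+1]$. That forces both slopes to be within $1$ of $\mu(N)=\frac{6d-2}{7}$. With $e_l=\deg Q_l\approx d/4$, the sub $S'^*\otimes Q$ has degrees $d_1+e_l\approx\frac{3d}{4}$. The quotient $(S''^*\otimes Q)/T_{\mathbb{P}^1}$ has rank $3$ and degree $4d_2+d-2\approx 3d$, so its slope is $\approx d$. Your claim that the quotient is ``balanced with the same slope window'' is therefore false.

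Already for $d=14$ you get $S'^*\otimes Q\simeq\mathcal{O}(10)^2\oplus\mathcal{O}(11)^2$ and a quotient of rank $3$ and degree $40$. The target is $\mathcal{O}(11)^2\oplus\mathcal{O}(12)^5$, and the lemma only confines degrees to $[10,14]$. For every $d\ge 15$ the quotient slope exceeds $\mu(N)+1$.

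Splitting $Q$ as well does not rescue this. When $S^*$ and $Q$ are both balanced, every subbundle of $T_{G(2,6)}|_C$ of the form $S_I^*\otimes Q_J$ (sub-sums of the two splittings) has all degrees at most $d_1+\max_l e_l\approx\frac{3d}{4}+1$. This falls below $\lfloor\frac{6d-2}{7}\rfloor$ as $d$ grows; for $d=22$ it is $17<18$. So no nonzero such sub works, and you would be left computing all seven syzygies among the eight components of $T_{\mathbb{P}^1}\to S^*\otimes Q$ at once. Since one curve suffices, within this method you must specialize to a curve whose $Q$ (or $S^*$) is far from balanced.

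That is the missing idea, and it is what the paper does. It writes $d=14t+r$; the period $14$ comes from the $7$ in $\mu(N)$ together with the parity of $d$. For $r=2p$ it takes $S^*\simeq\mathcal{O}(7t+p)^2$, but a relation module generated in degrees $5t+a,\,5t+b,\,4t+c,\,1$, and it splits $Q$ rather than $S$.

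Then $S^*\otimes Q'\simeq\mathcal{O}(12t+p+a)^2\oplus\mathcal{O}(12t+p+b)^2$ already sits in the window around $12t+\frac{12p-2}{7}$. By contrast, $S^*\otimes Q''\simeq\mathcal{O}(11t+p+c)^2\oplus\mathcal{O}(7t+p+1)^2$ is far too low, and only the quotient by $T_{\mathbb{P}^1}$ pushes it into the window. This works because the components $q_{31},q_{32},q_{41},q_{42}$ are monomials up to the constant $4t+c$; this is where characteristic $0$ is used. Three independent syzygies of degrees $12t+p+i-2$ (twice) and $12t+2p+2c-2i-2$ are then written down explicitly. For $r\in\{5,11,13\}$ the paper also splits $S$ and moves part of $S^*\otimes Q'$ into the quotient.

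Your instinct to make the $q$'s monomial points the right way, but it only works together with such an unbalanced $Q$. As it stands, your proposal supplies none of the matrices, relation modules, or syzygies that make up the proof.

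Your $d=2$ check is also wrong. Since $\deg N=6d-2=10$, $N$ cannot be $\mathcal{O}(2)^7$; the balanced type is $\mathcal{O}(1)^4\oplus\mathcal{O}(2)^3$. Moreover, a conic in $G(2,6)$ always lies in some $G(2,4)$, so it cannot be made nondegenerate.
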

\begin{proof} 
    The theorem has been proven for $d = 1$. We will show for $d \notin \{1,3,5\}$.
    Write $d = 14t + r$, $r \in \{0,1,\cdots, 13\}$, we divide into $3$ following cases:
    \setcounter{case}{0}
\begin{case} $r = 2p$, where $p \in \{0,1,\dots,6\}$\end{case}
Define $a,b,c,i$ as follows:
\begin{table}[H]
    \centering
    \begin{tabular}{c||c|c|c|c|c|c|c}
        $p$ & $0$ & $1$ & $2$ & $3$ & $4$ & $5$ & $6$ \\\hline\hline
        $a$ & $-1$ & $0$ & $1$ & $2$ & $3$ & $3$ & $4$ \\\hline
        $b$ & $0$ & $1$ & $1$ & $2$ & $3$ & $4$ & $4$ \\\hline
        $c$ & $0$ & $0$ & $1$ & $1$ & $1$ & $2$ & $3$ \\\hline
        $i$ &  $0$ & $0$ & $1$ & $2$ & $2$ & $3$ & $4$\\
    \end{tabular}
\end{table}

Note that $c = 2p - a - b -1$. Consider the map
{\small
\begin{equation}\label{g(2,6)-1}
\begin{aligned}
[x:y] &\longmapsto \\
&\begin{bmatrix}
x^{7t+p} + x^{4t+b+c-i}y^{3t+p-b-c+i} & y^{7t+p} & x^{2t-2c+2i}y^{5t+p+2c-2i} & -x^{6t-c+2i}y^{t+p+c-2i} & y^{7t+p} & -xy^{7t+p-1}\\
y^{7t+p} & $0$ & x^{t-c+i}y^{6t+p-i+c} & -x^{5t+i}y^{2t+p-i} & x^{7t+p-1}y & -x^{7t+p}
\end{bmatrix}
\end{aligned}
\end{equation}}

The rows of the matrix are linearly independent as the minor of the first $2$ columns is a power of $y$, and when plugging in $y=0$, the minor of the first and last columns is a power of $x$.

We claim the module of relations between the columns of \eqref{g(2,6)-1}, which corresponds to the universal quotient bundle $Q$, is generated by the following relations:
{\small
$$\begin{tabular}{c c c c c c}
\multirow{2}{2cm}[1ex]{$^*x^{4t+a+2i-c-p}\newline y^{t-2i+c+p}$} & \multirow{2}{4.2cm}[1ex]{$x^{5t+a-p-2c+3i}y^{p+2c-3i} \newline- x^{3t+a-p+i+1}y^{2t+p-i-1}-y^{5t+a}$} & \multirow{2}{2.2cm}[1ex]{$x^{5t+a}+x^{2t+p-i-1}\newline y^{3t+a-p+i+1}$}& $-x^{3t+a+i-p} y^{2t-i+p}$ & $y^{5t+a}$ & $-x^{3t+a-p+i} y^{2t+p-i}$ \strutA\\\hline
$x^{t-c+i}y^{4t+b+c-i}$ & \multirow{2}{3.6cm}[1ex]{$-x^{5t+b} -xy^{5t+b-1} \newline+ x^{2t-2c+2i}y^{3t+b+2c-2i}$}& $-y^{5t+b}$ & $-x^{2t+p-i} y^{3t+b-p+i}$ & 0 & $-y^{5t+b}$ \strutA\\\hline
0 & 0 & $x^{4t+c}$ & $y^{4t+c}$ & 0 & 0 \strutA\\\hline
0 & 0 & 0 & 0 & $x$ & $y$ \strutA
\end{tabular} $$
}
*Except in the case $p=3$, where $p+2c-3i <0$, the first relation is
{\small
$$\begin{tabular}{c c c c c c}
$x^{4t+2}y^{t}+x^{t+2}y^{4t}$ & \multirow{2}{3cm}[1ex]{$x^{3t+2}y^{2t} +x^{2t+3}y^{3t-1}\newline -x^2y^{5t}-y^{5t+2}$}&  \multirow{2}{1.8cm}[1ex]{$-x^{3t+1}y^{2t+1}\newline-xy^{5t+1}$} & \multirow{2}{2.5cm}[1ex]{$x^{5t+2}+x^{2t+2}y^{3t} \newline+x^{2t}y^{3t+2}$} & $y^{5t+2}$ & $-x^{3t+1} y^{2t+1}-xy^{5t+1}$
\strutA
\end{tabular} $$}

Indeed, it is easy to check that these are relations between the columns of \eqref{g(2,6)-1}. Moreover, when plugging in $(x,y)=(0,1)$, the minor of columns $2,3,4,6$ is nonzero, hence these relations are independent. Finally, these $4$ relations have the sum of degrees equal to $d=14t+r$, so they generate the module of relations.

Decompose $Q = Q' \oplus Q''$, where $Q'$ corresponds to the first 2 rows and $Q''$ corresponds to the last 2 rows. Using \eqref{par-der}, the map 
$ T_{\mathbb{P}^1} \rightarrow S^* \otimes Q''$ is given by $4$ polynomials of degrees $11t+c+p-2,11t+c+p -2, 7t+p-1,7t+p-1$, respectively, say $q_{31}, q_{32}, q_{41}, q_{42}$:
\begin{align*}
    &q_{31} = (4t+c) x^{6t-c+2i-1}y^{5t+p+2c-2i-1}\\
    &q_{32} = (4t+c)x^{5t+i-1}y^{6t+p+c-i-1}\\
    &q_{41} = y^{7t+p-1} \\
    &q_{42} = x^{7t+p-1}
\end{align*}

Since $\gcd(q_{41},q_{42})=1$, the map \eqref{g(2,6)-1} is unramified. We have the following relations among $q_{ij}$'s:
\begin{align*}
\begin{tabular}{c | c | c | c}
    $q_{31}$ & $q_{32}$ & $q_{41}$ & $q_{42}$ \\\hline
    $x^{t+p+c-2i}$ & 0 & 0 & $-(4t+c)y^{5t+p+2c-2i-1}$ \\
    $y^{t-c+i}$ & $-x^{t-c+i}$ & $0$ & $0$ \\
    $0$ & $- y^{t-c+i}$ & $(4t+c)x^{5t+i-1}$ & $0$ 
\end{tabular}
\end{align*}

These $3$ relations are independent because the minor of the first $3$ columns is a power of $x$ and the minor of columns $1,2,4$ is a power of $y$.
We thus have $3$ independent relations of degrees $12t+p+i-2, 12t+p+i-2, 12t+2p+2c-2i-2$ among these $4$ polynomials, which gives $(S^* \otimes Q'')/T_{\mathbb{P}_1} \simeq \mathcal{O}(12t+p+i)^2 \oplus \mathcal{O}(12t+2p+2c-2i)$. On the other hand, $S^* \otimes Q'\simeq \mathcal{O}(12t+p+a)^2 \oplus \mathcal{O}(12t+p+b)^2$. One can easily check that the degrees of the summands of $S^* \otimes Q'$ and $(S^* \otimes Q'')/T_{\mathbb{P}_1}$ are within $1$ of each other, hence so are the degrees of the summands of the normal bundle.

\begin{case}
   $r=2p+1$, where $p \in \{0,1,3,4,5,6\}$ and $t \geq 1$ if $p \in \{0,1\}$
\end{case}

Define $a,c$ as follows:
\begin{table}[H]
    \centering
    \begin{tabular}{c||c|c|c|c|c|c}
        $p$ & $0$ & $1$ & $3$ & $4$ & $5$ & $6$ \\\hline\hline
        $a$ & $0$ & $1$ & $2$ & $3$ & $4$ & $4$ \\\hline
        $c$ & $0$ & $0$ & $2$ & $2$ & $3$ & $4$ \\
    \end{tabular}
\end{table}

Consider the map
{
\begin{equation}\label{g(2,6)-2}
\begin{aligned}
[x:y] \longmapsto 
\begin{bmatrix}
x^{7t+p+1} + x^{4t+a}y^{3t+p-a+1} & y^{7t+p+1} & x^{2t+1}y^{5t+p} & -x^{6t+c+1}y^{t+p-c} & y^{7t+p+1} & -xy^{7t+p}\\
y^{7t+p} & $0$ & x^{t}y^{6t+p} & -x^{5t+c}y^{2t+p-c} & x^{7t+p-1}y & -x^{7t+p}
\end{bmatrix}
\end{aligned}
\end{equation}}

The rows of the matrix are linearly independent as the minor of the first $2$ columns is a power of $y$, and when plugging in $y=0$, the minor of the first and last columns is a power of $x$.

We claim the module of relations between the columns of \eqref{g(2,6)-2}, which corresponds to the universal quotient bundle $Q$, is generated by the following relations:
{\small
$$\begin{tabular}{c c c c c c}
$x^{t}y^{4t+a}$ & \multirow{2}{3.2cm}[1ex]{$-x^{5t+a} +x^{2t+1}y^{3t+a-1}\newline-xy^{5t+a-1}$}& $-y^{5t+a}$ &$x^{2t+p-c} y^{3t+a-p+c}$ & $0$ & $-y^{5t+a}$ \strutA\\[0cm]\hline
$^*$$x^{4t+p-a}y^{t+p-c}$ & \multirow{3}{3.3cm}[1ex]{$x^{5t+p-a+1}y^{p-c-1} \newline +x^{3t+p-a+1}y^{2t+p-c-1}\newline-y^{5t+2p-a-c}$}&  \multirow{2}{1.5cm}[1ex]{$-x^{3t+p-a}\newline y^{2t+p-c}$} & \multirow{2}{3cm}[1ex]{$x^{5t+2p-a-c} \newline+x^{2t+p-c-1}y^{3t+p-a+1}$ } & $y^{5t+2p-a-c}$ & $-x^{3t+p-a} y^{2t+p-c}$
\strutA\\[0.7cm]\hline
0 & 0 & $x^{4t+c}$ & $y^{4t+c}$ & 0 & 0 \strutA\\\hline
0 & 0 & 0 & 0 & $x$ & $y$ \strutA
\end{tabular} $$}

*Except in the case $p=0$, where $p-c-1 <0$, the second relation is
{
$$\begin{tabular}{c c c c c c}
$x^{4t}y^{t}+x^{t+1}y^{4t-1}$ & \multirow{2}{4cm}[1ex]{$-x^{3t+1}y^{2t-1}+x^{2t+2}y^{3t-2}\newline -x^2 y^{5t-2}-y^{5t}$}& $-x^{3t}y^{2t}\newline-xy^{5t-1}$ & $x^{5t}+x^{2t+1}y^{3t-1}$ & $y^{5t}$ & $-x^{3t} y^{2t}-xy^{5t-1}$
\strutA
\end{tabular} $$}

Indeed, it is easy to check that these are relations between the columns of \eqref{g(2,6)-2}. Moreover, when plugging in $(x,y)=(0,1)$, the minor of columns $2,3,4,6$ is nonzero, hence these relations are independent. Finally, these $4$ relations have the sum of degrees equal to $d=14t+r$, so they generate the module of relations.

\begin{itemize}
    \item[(i)] $p \in \{0,1,3,4\}$:
Decompose $Q = Q' \oplus Q''$, where $Q'$ corresponds to the first 2 rows and $Q''$ corresponds to the last 2 rows. Using Equation $\eqref{par-der}$, the map 
$ T_{\mathbb{P}^1} \rightarrow S^* \otimes Q''$ is given by $4$ polynomials of degrees $11t+p+c-1,11t+p+c -2, 7t+p,7t+p-1$, respectively, say $q_{31}, q_{32}, q_{41}, q_{42}$:
\begin{align*}
    &q_{31} = (4t+c) x^{6t+c}y^{5t+p-1}\\
    &q_{32} = (4t+c)x^{5t+c-1}y^{6t+p-1}\\
    &q_{41} = y^{7t+p} \\
    &q_{42} = x^{7t+p-1}
\end{align*}
Since $\gcd(q_{41},q_{42})=1$, the map \eqref{g(2,6)-2} is unramified. We have the following relations among the polynomials:
\begin{align*}
\begin{tabular}{c | c | c | c}
    $q_{31}$ & $q_{32}$ & $q_{41}$ & $q_{42}$ \\\hline
    $x^{t+p-c-1}$ & 0 & 0 & $-(4t+c)y^{5t+p-1}$ \\
    $y^{t}$ & $-x^{t+1}$ & $0$ & $0$ \\
    $0$ & $- y^{t+1}$ & $(4t+c)x^{5t+c-1}$ & $0$ 
\end{tabular}
\end{align*}
These $3$ relations are independent because the minor of the first $3$ columns is a power of $x$ and the minor of columns $1,2,4$ is a power of $y$.
We thus have $3$ independent relations of degrees $12t+2p-2, 12t+p+c-1, 12t+p+c-1$ among these $4$ polynomials, which gives $(S^* \otimes Q'')/T_{\mathbb{P}_1} \simeq \mathcal{O}(12t+p+c+1)^2 \oplus \mathcal{O}(12t+2p)$. On the other hand, $S^* \otimes Q'\simeq \mathcal{O}(12t+p+a)\oplus \mathcal{O}(12t+p+a+1)\oplus \mathcal{O}(12t+3p-a-c)\oplus\mathcal{O}(12t+3p-a-c+1)$. One can easily check that the degrees of the summands of $S^* \otimes Q'$ and $(S^* \otimes Q'')/T_{\mathbb{P}_1}$ are within $1$ of each other, hence so are the degrees of the summands of the normal bundle.

    \item[(ii)]$p \in \{5,6\}$:
    Decompose $S = S_1 \oplus S_2$, where $S_1$ and $ S_2$ correspond to the first and the second row of the matrix \eqref{g(2,6)-2}, respectively.
Decompose $Q = Q_1 \oplus Q_2\oplus Q_{34}$, where $Q_1$, $Q_2$, $Q_{34}$ correspond to the first row, the second row, and the last 2 rows, respectively. We derive a variant of the exact sequence \eqref{exact-seq}:
\begin{align*}
    0 \rightarrow S^* \otimes Q_1 \oplus S_1^* \otimes Q_2 \rightarrow N_{\mathbb{P}^1/G(k,n)} \rightarrow  (S_2^* \otimes Q_2 \oplus S^* \otimes Q_{34}) / {T_{\mathbb{P}^1}}  \rightarrow 0
\end{align*}

Note that $a+c=p+2$. Using Equation $\eqref{par-der}$, the map 
$ T_{\mathbb{P}^1} \rightarrow S^* \otimes Q''$ is given by $6$ polynomials of degrees $12t+3p-a-c-2= 12t+2p-4, 11t+p+c-1,11t+p+c-2, 7t+p,7t+p-1$, respectively, say $q_{21},q_{22}, q_{31}, q_{32}, q_{41}, q_{42}$. 
{
\begin{align*}
&q_{22} = (2t+p-c)x^{10t+2p-a-1}y^{2t+a-3} + (2t+p-c-1)x^{7t+p-2}y^{5t+p-2} - tx^{4t+p-a-1}y^{8t+p+a-3}\\
    &q_{31} = (4t+c) x^{6t+c}y^{5t+p-1}\\
    &q_{32} = (4t+c)x^{5t+c-1}y^{6t+p-1}\\
    &q_{41} = y^{7t+p} \\
    &q_{42} = x^{7t+p-1}
\end{align*}
}
Since $\gcd(q_{41},q_{42})=1$, the map \eqref{g(2,6)-2} is unramified. We have the following relations among the polynomials:
{\small
\begin{align*}
\begin{tabular}{c|c | c | c | c}
    $q_{22}$ & $q_{31}$ & $q_{32}$ & $q_{41}$ & $q_{42}$ \\\hline $x$ & 0 & 0  & $tx^{4t+p-a}y^{t+a-3}$ & $-(2t+p-c)x^{3t+p-a+1}y^{2t+a-3} - (2t+p-c-1)y^{5t+p-2}$ \\
    $y$ & $-\frac{2t+p-c-1}{4t+c}x^{t+p-c-2} $ & 0 & $tx^{4t+p-a-1}y^{t+a-2}$ & $-(2t+p-c)x^{3t+p-a}y^{2t+a-2}$ 
    \\
    0  & $-y^{t}$ & $x^{t+1}$ & $0$ & $0$ \\
    0 & $0$ & $y^{t+1}$ & $-(4t+c)x^{5t+c-1}$ & $0$ 
\end{tabular}
\end{align*}}

These $4$ relations are independent because the minor of the first $4$ columns is a power of $x$ and the minor of columns $1,2,3,5$ is a power of $y$. We thus have $4$ independent relations of degrees $12t+2p-3,12t+2p-3, 12t+p+c-1, 12t+p+c-1$ among these $4$ polynomials, which gives $$(S_2^* \otimes Q_2 \oplus S^* \otimes Q_{34}) / {T_{\mathbb{P}^1}} \simeq \mathcal{O}(12t+2p-1)^2 \oplus \mathcal{O}(12t+p+c+1)^2.$$
On the other hand, 
\begin{align*}
    S^* \otimes Q_1 \oplus S_1^* \otimes Q_2 &\simeq \mathcal{O}(12t+p+a+1) \oplus \mathcal{O}(12t+p+a) \oplus \mathcal{O}(12t+3p-a-c+1)^2 \\ &\simeq \mathcal{O}(12t+p+a+1) \oplus \mathcal{O}(12t+p+a) \oplus \mathcal{O}(12t+2p-1)^2.
\end{align*}
One can easily check that the degrees of the summands of $S^* \otimes Q_1 \oplus S_1^* \otimes Q_2$ and $(S_2^* \otimes Q_2 \oplus S^* \otimes Q_{34}) / {T_{\mathbb{P}^1}}$ are within $1$ of each other, hence so are the degrees of the summands of the normal bundle.
\end{itemize}

\begin{case}
    $r = 5$, $t \geq 1$
\end{case}
Consider the map
\begin{equation}\label{g(2,6)-3}
\begin{aligned}
[x:y] \longmapsto 
\begin{bmatrix}
x^{7t+3} + x^{4t+2}y^{3t+1} & y^{7t+3} & x^{2t-1}y^{5t+4} & -x^{6t+1}y^{t+2} & y^{7t+3} & -xy^{7t+2}\\
y^{7t+2} + x^{6t+1}y^{t+1} & 0 & x^{t-1}y^{6t+3} & -x^{5t+1}y^{2t+1} & x^{7t+1}y & -x^{7t+2}
\end{bmatrix}
\end{aligned}
\end{equation}

The rows of the matrix are linearly independent as the minor of the first $2$ columns is a power of $y$, and when plugging in $y=0$, the minor of the first and last columns is a power of $x$.

We claim the module of relations between the columns of \eqref{g(2,6)-3}, which corresponds to the universal quotient bundle $Q$, is generated by the following relations:
{\small
$$\begin{tabular}{c c c c c c}
$x^{4t-1}y^{t+2}$ & $x^{5t-1}y^{2} - x^{3t+1}y^{2t} + x^{3t-1}y^{2t+2} -y^{5t+1}$ & $-x^{3t}y^{2t+1}$ & $x^{5t+1}+x^{2t}y^{3t+1}$ & $y^{5t+1}$ & $-x^{3t}y^{2t+1}+x^{3t-2}y^{2t+3}$\strutA\\
\hline
\multirow{2}{1.5cm}[1ex]{$x^{4t-2}y^{t+3}\newline +x^{t-1}y^{4t+2}$} & \multirow{2}{4.5cm}[1ex]{$-x^{5t+1}+x^{5t-2}y^{3} - x^{3t}y^{2t+1} \newline+ x^{3t-2}y^{2t+3} +x^{2t-1}y^{3t+2}-xy^{5t}$} & \multirow{2}{1.6cm}[1ex]{$-x^{3t-1}y^{2t+2} \newline-y^{5t+1}$} & \multirow{2}{2cm}[1ex]{$x^{5t}y+x^{2t+1} y^{3t} \newline+ x^{2t-1} y^{3t+2}$} & 0 & \multirow{2}{2.7cm}[1ex]{$-x^{3t-1} y^{2t+2}\newline +x^{3t-3} y^{2t+4}-y^{5t+1}$} \strutA\\[0cm]\hline
0 & 0 & $x^{4t+2}$ & $y^{4t+2}$ & 0 & 0 \strutA\\ \hline
0 & 0 & 0 & 0 & $x$ & $y$\strutA
\end{tabular}$$
}

Indeed, it is easy to check that these are relations between the columns of \eqref{g(2,6)-3}. Moreover, when plugging in $(x,y)=(0,1)$, the minor of columns $2,3,4,6$ is nonzero, hence these relations are independent. Finally, these $4$ relations have the sum of degrees equal to $d=14t+5$, so they generate the module of relations.

Decompose $S = S' \oplus S''$, where $S'$ and $S''$ corresponds to the first and the second row of the matrix \eqref{g(2,6)-2}, respectively.
Decompose $Q = Q' \oplus Q''$, where $Q'$ corresponds to the first 2 rows and $Q''$ corresponds to the last 2 rows. We derive a variant of the exact sequence \eqref{exact-seq}:
\begin{align*}
    0 \rightarrow S'^* \otimes Q' \rightarrow N_{\mathbb{P}^1/G(k,n)} \rightarrow  (S''^* \otimes Q' \oplus S^* \otimes Q'') / {T_{\mathbb{P}^1}}  \rightarrow 0
\end{align*}

Using Equation \eqref{par-der}, the map 
$ T_{\mathbb{P}^1} \rightarrow S''^* \otimes Q' \oplus S^* \otimes Q''$ is given by $6$ polynomials of degrees $12t+1, 12t+1, 11t+3,11t+2, 7t+2,7t+1$, respectively, say $q_{12},q_{22}, q_{31}, q_{32}, q_{41}, q_{42}$. 
{\small
\begin{align*}
    &q_{12} = (2t+1)x^{10t+1}y^{2t} - (t+1)x^{10t-1}y^{2t+2}+ (2t)x^{7t}y^{5t+1} - (t-1)x^{4t-2}y^{8t+3}\\
    &q_{22} = (2t+1)x^{10t}y^{2t+1} - (t+1)x^{10t-2}y^{2t+3} + (2t+1)x^{7t+1}y^{5t} + tx^{7t-1}y^{5t+2} - (t-1)(x^{4t-3}y^{8t+4} +x^{t-2}y^{11t+3})\\
    &q_{31} = (4t+2) x^{6t}y^{5t+3}\\
    &q_{32} = (4t+2)x^{5t}y^{6t+2}\\
    &q_{41} = y^{7t+2} \\
    &q_{42} = x^{7t+1}
\end{align*}
}
Since $\gcd(q_{41},q_{42})=1$, the map \eqref{g(2,6)-3} is unramified. We have the following relations among these $6$ polynomials:
{\small
\begin{align*}
\begin{tabular}{c | c | c | c | c | c}
    $q_{12}$ & $q_{22}$ &$q_{31}$ & $q_{32}$ & $q_{41}$ & $q_{42}$ \\\hline
    $x$ & $0$ & $0$ & $0$ & $(t-1)x^{4t-1}y^{t+1}$ & $-(2t+1)x^{3t+1}y^{2t} + (t+1)x^{3t-1}y^{2t+2} - (2t)y^{5t+1} $  \\
    $-y$ & $2x$ & $0$ & $0$ & $(t-1)(x^{4t-2}y^{t+2} + 2x^{t-1}y^{4t+1})$ & $- (2t+1)x^{3t}y^{2t+1} + (t+1)x^{3t-2}y^{2t+3} - 2(2t+1)xy^{5t}$ \\
     $0$ & $y$ & $-\frac{t}{4t+2}x^{t-1}$ & $0$ & $(t-1)(x^{4t-3}y^{t+3} + x^{t-2}y^{4t+2})$ & $- (2t+1)x^{3t-1}y^{2t+2} + (t+1)x^{3t-3}y^{2t+4} - (2t+1)y^{5t+1}$ \\
     $0$ & $0$ & $ -y^{t-1}$ & $x^t$ & $0$ & $0$ \\
     $0$ & $0$ & $0$ & $y^t$ & $-(4t+2)x^{5t}$ & $0$
\end{tabular}
\end{align*}
}
These $5$ relations are independent because the minor of the first $5$ columns is a power of $x$, and when plugging in $x = 0$,  the minor of columns $1,2,3,4,6$ is a power of $y$. Thus, we have $5$ independent relations of degree $12t+2$, which gives $(S''^* \otimes Q' \oplus S^* \otimes Q'') / {T_{\mathbb{P}^1}} \simeq \mathcal{O}(12t+4)^5$. On the other hand, $S'^* \otimes Q' \simeq \mathcal{O}(12t+4)^2$.
One can easily check that the degrees of the summands of $S'^* \otimes Q'$ and $(S''^* \otimes Q' \oplus S^* \otimes Q'') / {T_{\mathbb{P}^1}}$ are equal, hence so are the degrees of the summands of the normal bundle.
\end{proof}
\section{Comparison with CLV24}
In this section, we describe the inductive argument in \cite{CLV24} that is used to show the 2-balancedness of normal bundles and see where it fails in proving the balancedness of normal bundles. 

In \cite{CLV24}, the authors showed that the normal bundle of a general curve is $2$-balanced using modifications of vector bundles. They established that the $k$-balancedness of the normal bundle of the general curve of degree $d+1$ follows from the $k$-balancedness of the normal bundle of the general curve of degree $d$ specialized under a one-secant degeneration. Then, iteratively peeling off one-secant lines from the general curve yields a curve whose normal bundle is $2$-balanced. Here we make precise the definitions, attempt this strategy to show the normal bundle of a general curve in $G(2,n)$ is balanced, and compare with our approach. 

\begin{definition}[Modifications of vector bundles] Let $X$ be a scheme and $D$ be a Cartier divisor on $X$. If $E$ is a vector bundle and $F$ is a subbundle of $E$, then we define the negative modification
$$E[D \xrightarrow{-} F] = \ker(E \to (E/F)|_D),$$
and the positive modification
$$E[D \xrightarrow{} F] = E[D \xrightarrow{-} F](D).$$
\end{definition}
\begin{definition}[Generalized pointing bundles] 
Let $V$ be an $(a+b)$-dimensional vector space and consider the Grassmannian $G(a,V)$. Let $p \in \mathbb{P}V$ be a general point and $h$ be a general codimension 1 subspace of $V$. Let $U_p := \{\lambda \in G(a,V) \mid p \notin \lambda\}$ and $U^h := \{\lambda \in G(a,V) \mid \lambda \not\subset h\}$. 
There is a projection map $\pi_p: U_p \to G(a,V/p)$ sending $\lambda$ to its image in $V/p$, and an intersection map $\pi^h: U^h \to G(a-1,h)$ sending $\lambda \mapsto \lambda \cap h$.

The lower generalized pointing bundle is defined as
$N_{C \to p} := \ker(N_C \to N_{\pi_p(C)})$. The upper generalized pointing bundle is defined as
$N^{C \to h} := \ker(N_C \to N_{\pi^h(C)})$.
\end{definition}

\begin{cor}[Corollary 3.6 in \cite{CLV24}]
    Suppose $C \subset U_p \cap U^h$. Then $N_{C \to p} \cap N^{C \to h} \simeq \mathcal{O}_C$.
\end{cor}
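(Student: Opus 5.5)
The plan is to work locally and reduce the statement to a rank computation. Suppose $C \subset U_p \cap U^h$. Write $S$ and $Q$ for the tautological sub and quotient bundles restricted to $C$, so that $T_{G}|_C = \mathrm{Hom}(S,Q)$. A tangent vector $\varphi\colon \lambda \to V/\lambda$ at a point $\lambda$ of $C$ has two images:
\begin{itemize}
\item under $d\pi_p$ it goes to the induced map $\lambda \to V/(\lambda+p)$;
\item under $d\pi^h$ it goes to the map $\lambda\cap h \to h/(\lambda\cap h)$, obtained by restricting $\varphi$ and lifting through $h/(\lambda\cap h)\cong V/\lambda$. This identification holds because $\lambda\not\subset h$ and $h$ is a hyperplane.
\end{itemize}
So $\ker d\pi_p = \mathrm{Hom}(S, \langle p\rangle)$, where $\langle p\rangle\subset Q$ is the image of the line $p$; this is a line subbundle of $Q$ since $p\notin\lambda$. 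Likewise $\ker d\pi^h = \mathrm{Hom}(S/(S\cap h), Q)$, where $S/(S\cap h)$ is the rank-one quotient of $S$.

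Next, the generalized pointing bundles are the images in $N_C$ of the preimages of these kernels, modulo $T_C$. More precisely, $N_{C\to p}$ is the image of $T_C + \mathrm{Hom}(S,\langle p\rangle)$ in $N_C$, and $N^{C\to h}$ is the image of $T_C + \mathrm{Hom}(S/(S\cap h),Q)$. For general $p,h$ and $C$ in the open set, these are subbundles, and the intersection inside $T_G|_C$ of the two preimages contains $T_C$. I would show that this intersection equals $T_C \oplus L$, where
\[
L = \mathrm{Hom}(S/(S\cap h),\langle p\rangle)\subset \mathrm{Hom}(S,Q).
\]
This $L$ is the intersection of the two kernels, and it is a line bundle. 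Its triviality is the key identity: $S/(S\cap h)\cong V/h\otimes\mathcal{O}_C$, because $\lambda\to V\to V/h$ is surjective when $\lambda\not\subset h$. Also $\langle p\rangle\cong p\otimes\mathcal{O}_C$, because $p\to V\to V/\lambda$ is injective when $p\notin\lambda$. Hence $L\cong (V/h)^*\otimes p\otimes \mathcal{O}_C\cong\mathcal{O}_C$, and its image in $N_C$ is $\mathcal{O}_C$ provided $L\cap T_C=0$.

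The main obstacle is the exact identification of the intersection. If $v+\alpha = w+\beta$ with $v,w\in T_C$, $\alpha\in\ker d\pi_p$ and $\beta\in\ker d\pi^h$, then $(v-w)$ lies in the sum of the kernels. So I need the intersection modulo $T_C$ to be exactly $L$, which means excluding extra contributions coming from $T_C\cap(\ker d\pi_p+\ker d\pi^h)$. I would handle this by a dimension count fiberwise. The quotient $N_C/(N_{C\to p}+N^{C\to h})$ should be the normal bundle of the composite projection-intersection to $G(a-1,h/p)$. Its rank is $(a-1)(b-1)$ as long as the composite stays an immersion of $C$, which holds for general $p,h$ by a dimension argument on the tangent line avoiding a codimension $\ge 2$ locus. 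Inclusion–exclusion of ranks, $b + a - 1 = (a+b-1)$ kernel contributions, then forces the intersection to have rank $1$, so it is exactly the image of $L$. Since $L$ is trivial and maps injectively (saturatedly) into $N_C$, we conclude $N_{C\to p}\cap N^{C\to h}\simeq\mathcal{O}_C$.
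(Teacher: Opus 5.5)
Your identifications are correct: $K_p:=\ker d\pi_p=\mathrm{Hom}(S,\bar p)$, $K^h:=\ker d\pi^h=\mathrm{Hom}(S/(S\cap h),Q)$, $N_{C\to p}=(K_p+T_C)/T_C$, $N^{C\to h}=(K^h+T_C)/T_C$, and $L=K_p\cap K^h\simeq (V/h)^\vee\otimes p\otimes\mathcal{O}_C\simeq\mathcal{O}_C$. You also isolate the real issue correctly. One must show $(K_p+T_C)\cap(K^h+T_C)=L+T_C$, and a fiberwise count gives this at any point where $T_C\not\subset K_p+K^h=\{\varphi:\varphi(S\cap h)\subset\bar p\}$.

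Your resolution of this step fails as written, for three reasons.

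(1) For general $p,h$ one has $p\notin h$, so there is no composite map to $G(a-1,h/p)$. Moreover, complete curves in $U_p\cap U^h$ can be tangent to $K_p+K^h$ everywhere. Take $G(2,4)$ in characteristic $\neq 2$, with $p=[e_4]$ and $h=\{x_4=0\}$, and let $C$ be the conic spanned by the rows $(2r,s,0,s)$ and $(0,r,2s,-r)$. Then $C$ lies in $U_p\cap U^h$, its tangent vector satisfies $\varphi(\lambda\cap h)\subset\bar p$ at every point, and $N_{C\to p}\cap N^{C\to h}$ has rank $2$. So the statement genuinely needs $(p,h)$ general with respect to $C$, and this has to enter through the tangent direction of $C$, not through a fibration.

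(2) Your rank count is off by one. When $T_C\not\subset K_p+K^h$, the quotient $N_C/(N_{C\to p}+N^{C\to h})$ has rank $ab-1-(a+b-1)=(a-1)(b-1)-1$. The value $(a-1)(b-1)$ you state is exactly the degenerate case, and inclusion--exclusion then gives an intersection of rank $2$.

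(3) The directions in $\mathbb{P}(T_\lambda G)$ lying in $K_p+K^h$ form a locus of codimension $(a-1)(b-1)$. This is $1$ for $G(2,4)$, so transversality at every point of $C$ is not available in general. Fortunately it is not needed.

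To close the gap, work at the generic point and then use saturation. First, $a,b\geq 2$: for $a=1$ (resp. $b=1$) the open set $U^h$ (resp. $U_p$) is affine and contains no complete curve. Pick $\lambda\in C$ with nonzero tangent vector $\varphi$. For general $h$, $\lambda\cap h$ is a general hyperplane of $\lambda$, so $\varphi(\lambda\cap h)\neq 0$. Then for general $p$, the line $\bar p\subset V/\lambda$ does not contain $\varphi(\lambda\cap h)$. Hence $T_C\not\subset K_p+K^h$ at the generic point of $C$. There $\dim\bigl((K_p+T_C)\cap(K^h+T_C)\bigr)=(a+1)+(b+1)-(a+b)=2$, so this intersection is $L+T_C$. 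Consequently $N_{C\to p}\cap N^{C\to h}$ has rank $1$ and contains the image of $L$.

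It remains to show the intersection is no bigger than that image. $K_p\cap T_C$ is a subsheaf of the line bundle $T_C$ that vanishes generically, so it is $0$; thus $K_p\to N_{C\to p}$ is an isomorphism. Also $L=\mathrm{Hom}(S/(S\cap h),\bar p)$ is a subbundle of $K_p=\mathrm{Hom}(S,\bar p)$, because $S/(S\cap h)\simeq (V/h)\otimes\mathcal{O}$ is a quotient line bundle of $S$ on $U^h$. So the image of $L$ is saturated in $N_{C\to p}$, and a rank-one subsheaf of $N_{C\to p}$ containing it must equal it. This gives $N_{C\to p}\cap N^{C\to h}\simeq L\simeq\mathcal{O}_C$.

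This argument uses only the generic point, and it never needs $N_{C\to p}$, $N^{C\to h}$ or the image of $L$ to be subbundles of $N_C$. It also works in any characteristic. Your fibration picture could only be rescued by first specializing to $p\in h$, where the fibers of $U_p\cap U^h\to G(a-1,h/p)$ are affine so $C$ is not contracted, and then using semicontinuity in $(p,h)$. That route also needs characteristic $0$, to pass from non-constant to generically unramified.
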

\begin{cor}[Corollary 3.7 in \cite{CLV24}]\label{cor-general}
    Fix $x \in C$ and p. For general $h$, the fiber $(N_{C \to p} \cap N^{C \to h})|_x$ is general in $N_{C \to p}|_x$.
\end{cor}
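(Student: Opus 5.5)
We prove Corollary \ref{cor-general}. The idea is to describe the subbundle $N_{C \to p} \cap N^{C \to h}$ explicitly inside $N_C$ at the point $x$. Then we check that letting $h$ vary makes this line sweep out a Zariski-dense (in fact open) subset of $\mathbb{P}(N_{C\to p}|_x)$.

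First we describe $N_{C\to p}$ fiberwise. Write $\lambda = \lambda_x \in G(a,V)$ for the point of $C$ over $x$, so $T_\lambda G = \operatorname{Hom}(\lambda, V/\lambda)$. The differential of $\pi_p$ at $\lambda$ is the map $\operatorname{Hom}(\lambda,V/\lambda)\to \operatorname{Hom}(\lambda, V/(\lambda+p))$. Its kernel is $\operatorname{Hom}(\lambda,(\lambda+p)/\lambda)\simeq \lambda^*$, which has dimension $a$. Passing to normal bundles, $N_{C\to p}|_x$ is the image of this kernel in $N_C|_x$, modulo whatever tangent direction of $C$ it absorbs. Since $C\subset U_p$ and we may assume $\pi_p|_C$ is unramified at $x$, the fiber $N_{C\to p}|_x$ is the image of $\operatorname{Hom}(\lambda,\langle p\rangle)$.

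Dually, the kernel of $d\pi^h$ at $\lambda$ consists of the maps $\phi\in \operatorname{Hom}(\lambda,V/\lambda)$ that kill $\lambda\cap h$ and whose induced deformation of $\lambda\cap h$ stays inside $h$. Intersecting with $\operatorname{Hom}(\lambda,\langle p\rangle)$ gives the condition that $\phi$ vanishes on the hyperplane $\lambda\cap h\subset \lambda$. This cuts out the line $\{\phi: \phi|_{\lambda\cap h}=0\}$, which is consistent with the preceding Corollary that the intersection is $\mathcal{O}_C$. Under $\operatorname{Hom}(\lambda,\langle p\rangle)\simeq\lambda^*$, this line is exactly the annihilator of $\lambda\cap h$, that is, the linear functional on $\lambda$ whose kernel is $\lambda\cap h$.

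As $h$ ranges over general hyperplanes of $V$, the hyperplane $\lambda\cap h$ ranges over a dense open set of hyperplanes in $\lambda$. This is because restriction $V^*\to\lambda^*$ is surjective, so the annihilating functional ranges over an open dense set of $\mathbb{P}(\lambda^*)$. The map $\operatorname{Hom}(\lambda,\langle p\rangle)\to N_{C\to p}|_x$ is a linear surjection, so its projectivization sends a dense subset onto a dense subset. Hence for general $h$ the fiber is a general point of $\mathbb{P}(N_{C\to p}|_x)$.

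The main obstacle is the bookkeeping of the tangent direction $T_C$. The kernel computations happen in $T_\lambda G$, whereas the bundles live in $N_C$, so we must check two things:
\begin{itemize}
\item the fiber of the saturated subsheaf $N_{C\to p}\cap N^{C\to h}$ at $x$ really is the image of the line computed above;
\item this line does not collapse into $T_C$.
\end{itemize}
For this we would use the hypotheses $C\subset U_p\cap U^h$ and the genericity of $h$: the locus of $h$ for which $\lambda\cap h$ is annihilated by a functional lying in the preimage of $T_C|_x$ is a proper closed condition, so it is avoided.
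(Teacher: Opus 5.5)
The paper gives no proof of this statement; it is quoted from \cite{CLV24}, so there is no in-paper argument to compare with. On its own terms, your argument is the natural direct one and is correct in substance. The two kernels are right: $\ker d\pi_p=\operatorname{Hom}(\lambda,(\lambda+p)/\lambda)$. For $d\pi^h$, the map $h/(\lambda\cap h)\to V/\lambda$ is an isomorphism when $\lambda\not\subset h$, so $d\pi^h$ is just restriction to $\lambda\cap h$; your extra clause about the deformation ``staying inside $h$'' is therefore vacuous. The two kernels meet in the line $\operatorname{Hom}(\lambda/(\lambda\cap h),p)$, and $\lambda\cap h$ is a general hyperplane of $\lambda$ for general $h$.

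The one step you flag but do not carry out is identifying the fiber at $x$ of the sheaf $\mathcal{F}=N_{C\to p}\cap N^{C\to h}$ with the image of that line. Your ``proper closed condition'' remark only rules out collapse into $T_C|_x$. A purely fiberwise comparison is also delicate: with $A,B\subset T_G|_C$ the two kernels, the space $(A_x+T_x)\cap(B_x+T_x)/T_x$ jumps in dimension when $T_x\subset A_x+B_x$.

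The step closes globally using the preceding corollary (Corollary 3.6 of \cite{CLV24}). The line subbundle $\mathcal{H}om(S/(S\cap h),p\otimes\mathcal{O}_C)\subset S^*\otimes Q$ is trivial, because $S/(S\cap h)\xrightarrow{\sim}V/h$ on $U^h$. It is killed by both $d\pi_p$ and $d\pi^h$, so it induces a map $\mathcal{O}_C\to\mathcal{F}$, which is nonzero as soon as $\pi_p|_C$ is generically unramified. Since $\mathcal{F}\simeq\mathcal{O}_C$, this nonzero map is an isomorphism. Because $\mathcal{F}$ is saturated in $N_C$, its fiber at $x$ is exactly the nonzero image of $\operatorname{Hom}(\lambda/(\lambda\cap h),p)$. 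In particular this line never falls into $T_C|_x$, so no extra genericity in $h$ is needed for that point.

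Finally, with $\pi_p|_C$ unramified at $x$ (which you need anyway, and which holds for general $p$), the map $\operatorname{Hom}(\lambda,p)\to N_{C\to p}|_x$ is an isomorphism. Your density argument then completes the proof.
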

Define notations for modifications towards generalized pointing bundles
$$N_C[x \rcurvearrowright p] := N_C[x \to N_{C \to p}] \text{ and } N_C[x \curvearrowright h] := N_C[x \to N^{C \to h}] $$

Let $L$ be a line in $G(a,V)$. Let $\mathcal{C} \subset G(a,V) \times \Delta \to \Delta$ be a family of curves with smooth total space, whose central fiber $\mathcal{C}_0$ is a reducible curve $C \cup_x L \subset G(a,V)$, where $C$ meets $L$ quasi-transversely at a single point $x \in G(a,V)$. Denote by $\mathcal{N}$ the normal bundle of $\mathcal{C}$ in $G(a,V) \times \Delta$. Let $\mathcal{N}'$ be a modification of $\mathcal{N}$ along Cartier divisors $\{D_i\}$ that do not meet $L$. Applying corresponding modifications to $N_C$ gives $N_C'$. Since $L$ is a $(-1)$-curve in the fiber of $\mathcal{C} \to \Delta$, we can blow down $L$ to obtain a family $\mathcal{C}^- \to \Delta$ whose central fiber is $C$. 
\begin{prop}\label{mainprop}[Lemma 4.1 in \cite{CLV24}]
    Let $p$ be a point and $h$ be a hyperplane in $\mathbb{P}V$ such that every $a$-plane in $L$ contains $x \cap h$ and is contained in $x + p$. There exists a vector bundle on $\mathcal{C}^-$ whose general fiber agrees with $N'|_{\Delta^*}$ and whose special fiber is $N_C'[x \rcurvearrowright p][x \curvearrowright h]$.
\end{prop}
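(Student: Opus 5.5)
We construct the bundle by elementary modification of the total-space normal bundle, then blow down $L$. Let $\beta:\mathcal{C}\to\mathcal{C}^-$ be the contraction of the $(-1)$-curve $L$; then $\mathcal{C}^-$ is smooth with central fiber $C$. Since the divisors $D_i$ avoid $L$, the modification $\mathcal{N}'$ coincides with $\mathcal{N}$ near $L$. So the problem is local at $x$, and we may treat $\mathcal{N}$ and $\mathcal{N}'$ interchangeably there.

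\textbf{Step 1: splitting of $\mathcal{N}|_L$.} First we compute the restriction of $\mathcal{N}$ to $L$. For a line in $G(a,V)$ whose $a$-planes all contain $\Lambda_0=x\cap h$ and lie in $\Lambda_1=x+p$, the restricted tangent bundle splits as
\[
T_G|_L\simeq \mathcal{O}(2)\oplus\mathcal{O}(1)^{\oplus(\dim G-\dim\text{star}-1)}\oplus \mathcal{O}^{\oplus(\cdots)}.
\]
The $\mathcal{O}(1)$-part is spanned by deformations that move the line while keeping $\Lambda_0$ fixed (the direction toward $p$), or keeping $\Lambda_1$ fixed (the direction toward $h$). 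Because the total space is smooth and $L$ is a fiber component meeting $C$ once, we have $\mathcal{N}|_L\simeq N_{L}$ modified positively at $x$ in the direction of $T_xC$.

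\textbf{Step 2: twisting so the bundle descends.} To descend a bundle along $\beta$, it must be trivial on $L$. We therefore apply negative modifications of $\mathcal{N}'$ along the Cartier divisor $L$ itself, toward the natural subbundles: first toward the subbundle corresponding to $N_{\to p}$ (the kernel of the differential of $\pi_p$), and then toward the subbundle corresponding to $N^{\to h}$ (the kernel of the differential of $\pi^h$). These projections are regular near $L$ because $L\subset U_p\cap U^h$. We choose the twists by multiples of $\mathcal{O}(L)$ so that the resulting bundle $\mathcal{F}$ satisfies $\mathcal{F}|_L\simeq\mathcal{O}_L^{\oplus\mathrm{rk}}$. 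Away from the central fiber, $L$ is absent, so $\mathcal{F}$ agrees with $\mathcal{N}'|_{\Delta^*}$.

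\textbf{Step 3: descent and identification of the special fiber.} Since $\mathcal{F}|_L$ is trivial, $\beta_*\mathcal{F}$ is a vector bundle on $\mathcal{C}^-$ with $\beta^*\beta_*\mathcal{F}=\mathcal{F}$. It remains to compute $(\beta_*\mathcal{F})|_C$. On $C$, the modifications along $L$ become modifications at the point $x=C\cap L$: one toward $N_{C\to p}$ and one toward $N^{C\to h}$. Their images are identified through the compatibility of $\pi_p$ and $\pi^h$ with normal bundles. This gives the special fiber $N'_C[x\rcurvearrowright p][x\curvearrowright h]$.

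\textbf{Main obstacle.} The hard part is Step 2 together with the bookkeeping in Step 3. One has to check that the kernels of the differentials of $\pi_p$ and $\pi^h$ restricted to $L$ are exactly the $\mathcal{O}(1)$-summands, and that the modification order yields triviality on $L$. I would first verify this in the model case $G(1,2)=\mathbb{P}^1\times\ldots$, or in local coordinates on $G(a,V)$ around $x$ adapted to the flag $x\cap h\subset x\subset x+p$. In such coordinates, both the line and the projections become linear, and the computation reduces to the $\mathbb{P}^n$ pointing-bundle argument.
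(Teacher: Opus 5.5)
The paper gives no proof of this statement; it is quoted as Lemma 4.1 of \cite{CLV24}. So I am judging your outline on its own. The overall shape is the natural one: modify $\mathcal{N}'$ along $L$ until it is trivial on $L$, push forward along $\beta$, and read off the restriction to $C$. But all of the content sits in Steps 2 and 3, which you yourself flag as the hard part, and Step 2 as written cannot work.

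First, $L\not\subset U_p\cap U^h$. The $a$-plane $(x\cap h)+p$ lies on $L$ and contains $p$, and the $a$-plane $(x+p)\cap h$ lies on $L$ and is contained in $h$. The hypothesis on $p,h$ says precisely that $\pi_p$ and $\pi^h$ contract $L$. So each projection is undefined at one point of $L$, and $T_L$ lies in both vertical distributions. Consequently their images in $\mathcal{N}|_L$ are only $N_{L\to p}\oplus N^{L\to h}\simeq\mathcal{O}(1)^{a-1}\oplus\mathcal{O}(1)^{b-1}$, where $\dim V=a+b$. (Also, the fibre of $\pi_p$ is where $x+p$ is fixed and the fibre of $\pi^h$ is where $x\cap h$ is fixed, the reverse of your Step 1.) Your subbundles therefore have to be defined as saturations on the surface $\mathcal{C}$, and you must prove they are subbundles along $L$ and compute their restrictions there.

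Second, no choice of twists makes your recipe trivial on $L$. We have $\deg\mathcal{N}|_L=a+b-1$. Twisting by $\mathcal{O}(L)$ changes the degree on $L$ by $-R$, where $R=ab-1$ is the rank of $\mathcal{N}$, since $\mathcal{O}(L)|_L\simeq\mathcal{O}_L(-1)$. A negative (resp.\ positive) modification along $L$ toward a rank $r$ subbundle changes it by $R-r$ (resp.\ $-r$). Modifying toward subbundles of ranks $a$ and $b$ (or $a-1$ and $b-1$) therefore leaves degree $\equiv\mp1\pmod R$ on $L$, which is never $0$ once $R\ge 2$.

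The missing idea is that the fibres of $N_{C\to p}$ and $N^{C\to h}$ at $x$ meet exactly in the image of $T_xL$; this is where the hypothesis on $p,h$ enters. That shared direction is already supplied by the family: $\mathcal{N}|_C=N_C[x\to T_xL]$ and $\mathcal{O}(L)|_C\simeq\mathcal{O}_C(x)$. So what you need is one twist and one negative modification $\mathcal{N}'(L)[L\xrightarrow{-}\mathcal{F}]$ toward a rank $a+b-1$ subbundle. The natural candidate for $\mathcal{F}$ is the saturation of $N_{\mathcal{C}\to p}+N^{\mathcal{C}\to h}$.

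For general $T_xC$ and $(a-1)(b-1)\ge 1$ one has $\mathcal{N}|_L\simeq\mathcal{O}(1)^{a+b-1}\oplus\mathcal{O}^{(a-1)(b-1)-1}$. The result is trivial on $L$ if and only if $\mathcal{F}|_L$ is exactly the $\mathcal{O}(1)^{a+b-1}$ summand, i.e.\ $N_{L\to p}\oplus N^{L\to h}$ together with the one extra line the saturation acquires along $L$. Establishing this, including at the two points where the projections are undefined, is the real computation.

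Step 3 is not automatic either. The restriction to $C$ is $N_C'[x\to T_xL][x\to\mathcal{F}_x]$: a modification toward a line followed by one toward an $(a+b-1)$-plane. To identify it with $N_C'[x\rcurvearrowright p][x\curvearrowright h]$, you need a local computation. For a uniformizer $t$ at $x$, show that both equal $N_C'+t^{-1}(N_{C\to p}+N^{C\to h})+t^{-2}(N_{C\to p}\cap N^{C\to h})$.
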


\begin{lemma}\label{deg-exactseq-balanced}
        Let $0 \to E \to F \to G \to 0$ be an exact sequence, where $E$ and $G$ are balanced. 
        \begin{itemize}
            \item[(i)] If $\mu(E) \leq \mu(F)$, then each summand of $G$ has degree at least $\lfloor \mu(E) \rfloor$ (hence so does each summand of $F$ by Lemma \ref{deg-exact-seq}).
            \item[(ii)] If $\mu(E) \geq \mu(F)$, then each summand of $G$ has degree at most $\lceil \mu(E) \rceil$ (hence so does each summand of $F$ by Lemma \ref{deg-exact-seq}).
        \end{itemize}
    \end{lemma}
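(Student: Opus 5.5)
The plan is to reduce both parts to a slope comparison between $E$ and $G$, and then use balancedness of $G$ together with Lemma \ref{deg-exact-seq}. Throughout I assume $E$ and $G$ have positive rank, so that their slopes are defined. If $G = 0$ there is nothing to prove about $G$, and $F \simeq E$ is balanced, so the parenthetical claim about $F$ holds trivially.

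First I will record the additivity of rank and degree in the exact sequence: $\operatorname{rk} F = \operatorname{rk} E + \operatorname{rk} G$ and $\deg F = \deg E + \deg G$. Hence
$$\mu(F) = \frac{\operatorname{rk} E}{\operatorname{rk} F}\,\mu(E) + \frac{\operatorname{rk} G}{\operatorname{rk} F}\,\mu(G),$$
so $\mu(F)$ is a convex combination of $\mu(E)$ and $\mu(G)$ with strictly positive weights. Consequently, in case (i), $\mu(E) \le \mu(F)$ forces $\mu(F) \le \mu(G)$, and therefore $\mu(E) \le \mu(G)$. Symmetrically, in case (ii), $\mu(E) \ge \mu(F)$ gives $\mu(G) \le \mu(F) \le \mu(E)$.

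Next I will use balancedness. A balanced bundle $B = \bigoplus \mathcal{O}(b_j)$ with all $b_j$ within $1$ of each other has every $b_j \in \{\lfloor \mu(B) \rfloor, \lceil \mu(B) \rceil\}$. Indeed, if $\mu(B)$ is an integer all $b_j$ equal it; otherwise the $b_j$ take the two consecutive values bracketing $\mu(B)$.

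In case (i), every summand of $G$ has degree at least $\lfloor \mu(G) \rfloor \ge \lfloor \mu(E) \rfloor$, since the floor function is monotone. Every summand of $E$ also has degree at least $\lfloor \mu(E) \rfloor$. Choosing $M$ larger than all summand degrees of $E$ and $G$, the degrees of all summands of $E$ and $G$ lie in $[\lfloor \mu(E) \rfloor, M]$. Lemma \ref{deg-exact-seq} then places all summand degrees of $F$ in that interval, which gives the parenthetical claim.

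Case (ii) is identical with the inequalities reversed. Every summand of $G$ has degree at most $\lceil \mu(G) \rceil \le \lceil \mu(E) \rceil$, and every summand of $E$ has degree at most $\lceil \mu(E) \rceil$. Applying Lemma \ref{deg-exact-seq} with an interval $[m, \lceil \mu(E) \rceil]$, where $m$ is small enough, bounds the summands of $F$ as well.

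There is no real obstacle here. The only points needing care are the convex-combination step, which requires positive ranks, and the degenerate cases where $E$ or $G$ is zero.
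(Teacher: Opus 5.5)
Your proof is correct and follows essentially the same route as the paper: the weighted-average expression for $\mu(F)$ turns $\mu(E)\le\mu(F)$ into $\mu(E)\le\mu(G)$, and balancedness of $G$ plus monotonicity of $\lfloor\cdot\rfloor$ and $\lceil\cdot\rceil$ gives the bound. Your one-sided application of Lemma \ref{deg-exact-seq} (on $[\lfloor \mu(E)\rfloor, M]$, or $[m,\lceil\mu(E)\rceil]$ in case (ii)) and your handling of the zero-rank cases simply spell out details the paper leaves implicit.
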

    \begin{proof}
        Since $\mu(F)$ is the weighted average of $\mu(E)$ and $\mu(G)$, we have $\mu(E) \leq \mu(F) \Leftrightarrow \mu(E) \leq \mu(G)$, in which case each summand of $G$ has degree at least $\lfloor \mu(G)\rfloor \geq \lfloor \mu(E)\rfloor$. In case $(ii)$, we similarly have $\mu(E) \geq \mu(G)$, so each summand of $G$ has degree at most $\lceil \mu(G)\rceil \leq \lceil \mu(E)\rceil.$
    \end{proof}

    Let $C$ be a general rational curve of degree $d$ in $G(2,b+2)$ and $n$ be a nonnegative integer. For general points $x_i \in C$ and $p_i \in \mathbb{P}^{1+b}$, let $N':= N_C[x_1\rcurvearrowright p_1]\dots[x_n\rcurvearrowright p_n]$.
    Let $\delta = \frac{(b+2)d-2+2n}{2b-1}-\frac{d}{2}-n$. Then $\delta \leq d-1$.
    
    When $0 \leq \delta \leq d-1$, we iteratively specialize $C$ to the union of a rational curve of degree $1$ less and a $1$-secant line, then apply Proposition \ref{mainprop}, for $[\delta]$ times, where $[\delta] \in \{\lfloor \delta \rfloor, \lceil\delta\rceil\}$. Then $C$ has degree $d-[\delta]$ and the normal bundle becomes
    $$N_C[y_1 \rcurvearrowright p_1][y_1  \curvearrowright h_1]...[y_{[\delta]} \rcurvearrowright p_{[\delta]}][y_{[\delta]}  \curvearrowright h_{[\delta]}].$$
    Specializing all $p_i$ to a point $p$ and specializing all $h_i$ to contain the point $p$ induces a specialization of $N_C$ to $N_C'$, and we have the exact sequence of projection from $p$:
    \begin{equation}\label{exseq-2}
    0 \to S' \to N_C' \to N' \to 0
\end{equation}
    where $S' := S|_C^*(n+[\delta])[y_1 \curvearrowright \pi_p(h_1)]\dots[y_{[\delta]} \curvearrowright \pi_p(h_{[\delta]})]$, and $N' := N_{\pi_p(C)}[y_1 \curvearrowright \pi_p(h_1)]\dots[y_{[\delta]} \curvearrowright \pi_p(h_{[\delta]})].$
    
    We have $S'$ is balanced by Corollary \ref{cor-general}, with slope $\mu(S
') = \frac{d}{2}+n+[\delta]$, and $N' \subset G(b-1,b+1)$ has slope $\mu(N') = \frac{(b+1)d-2[\delta]-2}{2b-3}$, and $N_C'$ has slope $\mu(N_C') = \frac{(b+2)d-2+2n}{2b-1}$.
We have $\frac{d}{2}+n+\lfloor\delta\rfloor \leq \frac{d}{2}+n+\delta = \mu(N_C') < \frac{d}{2}+n+\lceil\delta\rceil$. Assume $N'$ is balanced. Then 
        \begin{itemize}
        \item[(i)] Choosing $[\delta] = \lfloor\delta\rfloor$ gives $\mu(S') \leq \mu(N_C')$, so by Lemma \ref{deg-exactseq-balanced}, each summand of $N_C'$ has degree at least $\lfloor\frac{d}{2}\rfloor + n + \lfloor\delta\rfloor $.
        \item[(ii)] Choosing $[\delta] = \lceil\delta\rceil$ gives $\mu(S') \geq \mu(N_C')$, so by Lemma \ref{deg-exactseq-balanced}, each summand of $N_C'$ has degree at most $\lceil\frac{d}{2}\rceil + n + \lceil\delta\rceil $.
        \end{itemize}
    
    Hence all summands of $N_C'$ have degree in the interval $[\lfloor\frac{d}{2}\rfloor + n + \lfloor\delta\rfloor ,\lceil\frac{d}{2}\rceil + n + \lceil\delta\rceil]$. When $2\mid d$, this interval has length 1, therefore $N_C'$ is balanced. However, when $2 \nmid d$, we cannot conclude that $N_C'$ is balanced. We then consider the exact sequence \eqref{exseq-2}, which can be written as
    \begin{equation}\notag
    0 \to \mathcal{O}\left(\frac{d-1}{2}+n+[\delta]\right) \oplus \mathcal{O}\left(\frac{d+1}{2}+n+[\delta]\right) \to N_C' \to \mathcal{O}(\lfloor \mu(N')\rfloor)^{r_1} \oplus \mathcal{O}(\lfloor \mu(N')\rfloor+1)^{r_2} \to 0
\end{equation}
We can conclude that $N_C'$ is balanced if either $\frac{d-1}{2}+n+[\delta] = \lfloor \mu(N')\rfloor$ or $\frac{d+1}{2}+n+[\delta] = \lfloor \mu(N')\rfloor$ and $2b-3 \mid (b+1)d-2[\delta]-2$ (that is $N'$ is perfectly balanced). The former is equivalent to 
$$\frac{d-1}{2}+n+[\delta] \leq  \mu(N') < \frac{d+1}{2}+n+[\delta], $$
which is equivalent to
$$\delta - \frac{2b-3}{2(2b-1)} \leq [\delta] < \delta + \frac{2b-3}{2(2b-1)}.$$
Since $\{\delta\} = \frac{k}{2(2b-1)}$ for some odd integer k, the inequality fails for both $\lfloor\delta\rfloor$ and $\lceil\delta\rceil$ iff $2b-3 < k < 2(2b-1)-(2b-3)$, hence iff $k = 2b-1$, that is $\{\delta\} = \frac{1}{2}$. Similarly, the latter also fails iff $\{\delta\} = \frac{1}{2}$. We have $\{\delta\} = \frac{1}{2} \Leftrightarrow 5d-4 \equiv 2b-1 \text{ mod } 2(2b-1) \Leftrightarrow 5d \equiv 2b+3 \text{ mod } 2(2b-1)$ . Therefore, the argument that $N_C'$ is balanced follows from $N'$ is balanced fails iff $d$ is odd and $5d \equiv 2b+3 \text{ mod } 2(2b-1)$, hence iff $5d \equiv 2b+3 \text{ mod } 2(2b-1)$.
  
 Note that in the other congruence classes where this argument works, that is, $5d \not\equiv 2b+3 \text{ mod } 2(2b-1)$, it does not directly prove that the normal bundle is balanced, but it reduces from normal bundle of a curve of degree $d$ with $n$ modifications in $G(2,b+2)$ to normal bundle of a curve of degree $d - [\delta]$ with $[\delta]$ modifications in $G(b-1,b+1)$. We further observe that the cases in which this strategy breaks down are precisely the cases in which we have to consider separately in our approach.
  In $G(2,4)$, the argument in \cite{CLV24} does not work for rational curves of degrees $d=6t+5$, mirroring how we consider the case of degree 5 mod 6 separately in the argument for Theorem \ref{g24thm}. In $G(2,6)$, the argument in \cite{CLV24} does not work for rational curves of degrees $d=14t+5$, mirroring how we consider the case of degree 5 mod 14 separately in the argument for Theorem \ref{g26thm}. 
  
\bibliographystyle{alpha}
\bibliography{citations}
\end{document}